\chardef\bslash=`\\ 
\numberwithin{equation}{section}
\newtheorem{theorem}{Theorem}[section]
\newtheorem{corollary}[theorem]{Corollary}
\newtheorem{lemma}[theorem]{Lemma}
\newtheorem{proposition}[theorem]{Proposition}
\theoremstyle{remark}
\newtheorem{remark}[theorem]{Remark}
\theoremstyle{definition}
\newcommand\bp{\begin{proof}}
\newcommand\ep{\end{proof}}
\newcommand{\C}{\mathbb C}
\newcommand{\N}{\mathbb N}
\newcommand{\Q}{\mathbb Q}
\newcommand\M{\mathfrak M}
\newcommand\NN{\mathfrak N}
\newcommand\LR{L(R)}
\newcommand\CR{\C[R]}
\newcommand\eps{\varepsilon}
\newcommand\image{\operatorname{im}}
\newcommand\Mn{\operatorname{Mat}_n}
\newcommand\Mod{\operatorname{-Mod}}
\newcommand\Tor{\operatorname{Tor}}
\newcommand\enu[1]{\smallskip\newline\noindent{\rm(#1)}}
\begin{document}

\title{On the definition of $L^2$-Betti numbers of equivalence relations}

\author[S. Neshveyev]{Sergey Neshveyev}
\address{Department of Mathematics, University of Oslo,
P.O. Box 1053 Blindern, N-0316 Oslo, Norway.}
\email{sergeyn@math.uio.no}

\author[S. Rustad]{Simen Rustad}
\address{Department of Mathematics, University of Oslo,
P.O. Box 1053 Blindern, N-0316 Oslo, Norway.}
\email{simenru@math.uio.no}

\begin{abstract}
We show that the $L^2$-Betti numbers of equivalence relations defined
by R. Sauer coincide with those defined by D. Gaboriau.
\end{abstract}

\date{June 3, 2008}

\maketitle

\section*{Introduction}

The notion of $L^2$-Betti numbers of countable standard equivalence
relations was introduced in a celebrated paper of
Gaboriau~\cite{Ga1}. A few years later a different definition was
given by Sauer~\cite{S}. While Gaboriau's construction was motivated
by Cheeger and Gromov's definition of $L^2$-Betti numbers of
discrete groups~\cite{CG}, Sauer was inspired by the algebraic
framework developed by L\"uck~\cite{L}. Each definition has its own
advantages. E.g. the proof of the theorem of Gaboriau that orbit
equivalent groups have the same $L^2$-Betti numbers is quite short
and transparent in his setting. On the other hand, the computational
power of homological algebra is better accessible through Sauer's
definition, see~\cite{ST}. The two approaches are equivalent for
equivalence relations generated by free actions of discrete
groups~\cite{S}. The aim of this note is to show that they are
equivalent in general.

\bigskip

\section{Dimension theory and homological algebra}

Let $M$ be a finite von Neumann algebra with a fixed faithful normal
tracial state $\tau$. For a finitely generated projective $M$-module
$P\cong M^np$, where $p\in\Mn(M)=M\otimes\Mn(\C)$ is a projection,
its dimension is defined by
$\dim_MP=(\tau\otimes\operatorname{Tr})(p)$. L\"uck extended the
dimension function to all $M$-modules by letting
$$
\dim_MQ=\sup\{\dim_MP\mid P\subset Q\hbox{ is
projective}\}\in[0,+\infty],
$$
see \cite{L}. The most important properties of $\dim_M$ are
additivity and cofinality. Together they imply that if $Q$ is an
inductive limit of modules $Q_i$ with $\dim_MQ_i<\infty$ then
$$
\dim_MQ=\lim_i\lim_j\dim_M\operatorname{im}(Q_i\to Q_j).
$$

A morphism $h\colon Q_1\to Q_2$ of $M$-modules is called a
$\dim_M$-isomorphism if both $\ker h$ and $\operatorname{coker}h$
have dimension zero. By localizing the category $M\Mod$ of
$M$-modules by the subcategory of zero-dimensional modules one can
deal with $\dim_M$-isomorphisms as with usual isomorphisms. What
makes life even better is that the localized category can be
embedded back into the category of $M$-modules using the functor of
rank completion introduced by Thom~\cite{Th}. The definition of this
functor is motivated by the following criterion~\cite{S}: an
$M$-module $Q$ has dimension zero if and only if for any $\xi\in Q$
and $\eps>0$ there exists a projection $p\in M$ such that $p\xi=\xi$
and $\tau(p)<\eps$. Now for $Q\in M\Mod$ and $\xi\in Q$ define
$$
[\xi]_M=\inf\{\tau(p)\mid p\hbox{ is a projection in }M,\
p\xi=\xi\}.
$$
Then $d_M(\xi,\zeta):=[\xi-\zeta]_M$ is a pseudometric on $Q$.
Denote by $c_M(Q)$ the completion of $Q$ in this pseudometric, that
is, the quotient of the module of Cauchy sequences by the submodule
of sequences converging to zero. Any $M$-module map $h\colon Q_1\to
Q_2$ is a contraction in the pseudometric $d_M$, hence it defines a
morphism $c_M(h)\colon c_M(Q_1)\to c_M(Q_2)$. Therefore $c_M$ is a
functor $M\Mod\to M\Mod$, called the functor of rank
completion~\cite{Th}. Notice that although $d_M$ depends on the
choice of the trace, the corresponding uniform structure does not,
so the functor~$c_M$ does not depend on the choice of the trace
either.

\begin{lemma}[\cite{Th}] \label{lThom}
We have:\enu{i} for any $Q\in M\Mod$ the completion map $Q\to
c_M(Q)$ is a $\dim_M$-isomorphism; \enu{ii} $\dim_MQ=0$ if and only
if $c_M(Q)=0$; more generally, a morphism $h\colon Q_1\to Q_2$ is a
$\dim_M$-isomorphism if and only if $c_M(h)\colon c_M(Q_1)\to
c_M(Q_2)$ is an isomorphism; \enu{iii} the functor $c_M$ is exact.
\end{lemma}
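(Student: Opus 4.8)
The plan is to derive all three parts from the stated zero-dimensionality criterion together with one algebraic observation: for an \emph{injective} morphism $f$ one has $[f(\xi)]_M=[\xi]_M$ for every $\xi$. Indeed $pf(\xi)=f(p\xi)$ by $M$-linearity, and by injectivity this equals $f(\xi)$ exactly when $p\xi=\xi$, so the two sets of fixing projections coincide. Hence $c_M(f)$ is isometric, in particular injective with complete (so closed) image. I will also use freely, as recorded above, that every module map is a contraction, so $c_M$ is a well-defined functor $M\Mod\to M\Mod$, and that $[\cdot]_M$ is subadditive (if $p\xi=\xi$ and $q\zeta=\zeta$ then $p\vee q$ fixes $\xi+\zeta$ with $\tau(p\vee q)\le\tau(p)+\tau(q)$), so $d_M$ really is a pseudometric.

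For (i), the kernel of the completion map is $\{\xi : [\xi]_M=0\}$, whose elements are fixed by projections of arbitrarily small trace; by the criterion this submodule has dimension zero. For the cokernel $C=c_M(Q)/\image Q$ I fix $\eta\in c_M(Q)$ and $\eps>0$, choose $\xi\in Q$ with $\bar d(\eta,\bar\xi)<\eps$, and set $\zeta=\eta-\bar\xi$, which has small rank distance to $0$. Representing $\zeta$ by a rapidly convergent sequence in $Q$ with $[\zeta_1]_M<\eps$ and $[\zeta_{n+1}-\zeta_n]_M<2^{-n}\eps$, I pick projections fixing $\zeta_1$ and each increment and form their supremum $p=\bigvee_n p_n$; normality of $\tau$ gives $\tau(p)\le\sum_n\tau(p_n)<2\eps$, while $p$ fixes every $\zeta_n$ and hence $\zeta$. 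Then $(1-p)\eta=(1-p)\bar\xi\in\image Q$, so $p$ fixes the class of $\eta$ in $C$ and $[\bar\eta]_C\le\tau(p)<2\eps$; letting $\eps\to0$ and applying the criterion gives $\dim_M C=0$. The first assertion of (ii) is now immediate: if $\dim_M Q=0$ the criterion makes $d_M$ identically zero, so $c_M(Q)=0$; conversely $c_M(Q)=0$ forces each constant sequence to be null, i.e. $[\xi]_M=0$ for all $\xi$, whence $\dim_M Q=0$.

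The technical heart is the exactness (iii), which I would verify on a short exact sequence $0\to Q_1\xrightarrow{f}Q_2\xrightarrow{g}Q_3\to0$. Injectivity of $c_M(f)$ is the isometry observation. For surjectivity of $c_M(g)$ the key is a length-controlled lifting: if $[\zeta]_M<\eps$ in $Q_3$ and $g(\omega)=\zeta$, then for a projection $p$ with $p\zeta=\zeta$, $\tau(p)<\eps$, the element $p\omega$ still satisfies $g(p\omega)=\zeta$ but $[p\omega]_M\le\tau(p)<\eps$. Representing an element of $c_M(Q_3)$ by a rapidly Cauchy sequence and lifting its increments this way builds a Cauchy sequence in $Q_2$ mapping onto it. For exactness in the middle, $c_M(g)c_M(f)=0$ by functoriality; conversely, if $\omega=\lim\bar\omega_n\in\ker c_M(g)$ then $[g(\omega_n)]_M\to0$, so truncating by projections $q_n$ fixing $g(\omega_n)$ with $\tau(q_n)\to0$ replaces $\omega_n$ by $(1-q_n)\omega_n\in\ker g=\image f$ without moving the limit, and since $c_M(f)$ is an isometry onto a closed subspace the corresponding preimages in $c_M(Q_1)$ converge and exhibit $\omega\in\image c_M(f)$.

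Finally, the general statement of (ii) follows formally: factor $h$ as $Q_1\to\image h\to Q_2$ and apply (iii) to $0\to\ker h\to Q_1\to\image h\to0$ and $0\to\image h\to Q_2\to\operatorname{coker}h\to0$. By the first part of (ii), $h$ is a $\dim_M$-isomorphism iff $c_M(\ker h)=c_M(\operatorname{coker}h)=0$, and the resulting exact sequences then show this holds iff $c_M(h)$, which is the composite $c_M(Q_1)\to c_M(\image h)\hookrightarrow c_M(Q_2)$, is an isomorphism. I expect the main obstacle to be the analytic input hidden in these projection arguments, namely that a small rank distance in the \emph{completed} module is always realized by a genuine projection in $M$ of small trace, obtained as a supremum of a summable family via normality of $\tau$; and, in (iii), the need to synchronize the truncation step with the closed-image property of the isometry $c_M(f)$ so that the adjusted sequences actually converge in $c_M(Q_1)$.
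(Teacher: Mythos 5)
Your proposal is correct, but it is worth noting that the paper does not actually prove this lemma: its ``proof'' consists of the remarks that part (i) and the first half of (ii) follow immediately from the definition of $c_M$ and the zero-dimensionality criterion, that the second half of (ii) follows formally from exactness, and then a citation of \cite[Lemma~2.6]{Th} for part (iii), together with the observation that Thom's bimodule arguments carry over verbatim to one-sided modules. Your deduction skeleton is exactly the paper's --- the criterion handles (i) and the first half of (ii), and the two short exact sequences through $\image h$ reduce the second half of (ii) to (iii) --- but you supply the substantive content that the paper outsources: the isometry of $c_M(f)$ for injective $f$, the trace-controlled lifting through a surjection (replace a lift $\omega$ of $\zeta$ by $p\omega$, so that $g(p\omega)=p\zeta=\zeta$ and $[p\omega]_M\le\tau(p)$), and the truncation of a sequence representing an element of $\ker c_M(g)$ by projections of small trace, all resting on summing projections via normality of $\tau$. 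These are in essence the ingredients of Thom's own proof, so what your write-up buys is independence from the citation, at the cost of length. One small caution: multiplication by a projection $p\in M$ is \emph{not} an $M$-module map, so when you let $p$ pass through limits (``$p$ fixes every $\zeta_n$ and hence $\zeta$'') you are not using the contraction property of module maps that you invoke, but rather the estimate $[m\xi]_M\le[\xi]_M$ for $m\in M$ (via support projections of $mq$), which is precisely what makes $c_M(Q)$ an $M$-module at all; since the paper establishes the functor $c_M\colon M\Mod\to M\Mod$ before stating the lemma, you are entitled to this, but it is a separate fact from the one you cite.
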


We remark that our setting is not the same as that studied by
Thom~\cite{Th}. There, he considers $M$-bimodules $Q$ and defines
$$
[\xi]=\inf\{\tau(p)+\tau(q)\mid p\xi q = \xi\}.
$$
However, all the proofs work equally well if we rather than
$M$-bimodules consider $M$-$N$-bimodules. Our situation then
corresponds to the case when $N$ consists of the scalars.
Furthermore, part (i) and the first part of (ii) in the above lemma
follow immediately by definition and the criterion of zero
dimensionality, while the second part of (ii) then follows from
exactness. Thus the only statement in Lemma~\ref{lThom} which
requires a proof is part (iii).
See \cite[Lemma~2.6]{Th} for details.

If $Q\subset P$ is dense in the pseudometric $d_M$ then we say that
$Q$ is $M$-dense in $P$. If $Q=c_M(Q)$, we say that $Q$ is
$M$-complete.

\medskip

For a pair of algebras $N\subset M$ we shall always assume that both
the algebras and the embedding are unital. Furthermore, if $N$ and
$M$ are finite von Neumann algebras then we shall assume that the trace on $N$ is the restriction of the trace on $M$.

\begin{lemma} \label{lcomp}
Assume $N\subset\M$ is a pair of algebras such that $N$ is a finite
von Neumann algebra. Then the following conditions are equivalent:
\enu{i} for any $Q\in\M\Mod$ and $m\in\M$ the map $Q\to Q$,
$\xi\mapsto m\xi$, is uniformly continuous with respect to the
pseudometric $d_N$; \enu{ii} for any $m\in\M$ and $\eps>0$ there
exists $\delta>0$ such that if $p\in N$ is a projection with
$\tau(p)<\delta$ then $[mp]_N<\eps$; \enu{iii} if $Q\in N\Mod$ is
such that $\dim_NQ=0$ then $\dim_N(\M\otimes_NQ)=0$.
\end{lemma}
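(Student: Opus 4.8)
The plan is to establish the cycle (i) $\Leftrightarrow$ (ii), (ii) $\Rightarrow$ (iii) and (iii) $\Rightarrow$ (ii), the last by contraposition. Two facts will be used repeatedly: the zero-dimensionality criterion quoted before the lemma, namely that $\dim_N Q=0$ if and only if $[\xi]_N=0$ for every $\xi\in Q$, and the elementary bound $[p]_N\le\tau(p)$ for a projection $p\in N$, valid because $p$ fixes itself. For (i) $\Leftrightarrow$ (ii) I would first observe that, by linearity, uniform continuity of $\xi\mapsto m\xi$ means exactly that for every $\eps>0$ there is $\delta>0$ with $[\eta]_N<\delta\Rightarrow[m\eta]_N<\eps$. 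Taking $Q=\M$ and $\eta=p$ a projection with $\tau(p)<\delta$ gives (ii) at once, since $[p]_N\le\tau(p)<\delta$. Conversely, given $\eta\in Q$ with $[\eta]_N<\delta$, I choose a projection $p\in N$ fixing $\eta$ with $\tau(p)<\delta$; then $m\eta=(mp)\eta$, and since $[mp]_N<\eps$ by (ii) there is a projection $q\in N$ with $qmp=mp$ and $\tau(q)<\eps$, which then also fixes $m\eta$, so $[m\eta]_N\le\tau(q)<\eps$.

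For (ii) $\Rightarrow$ (iii) I would take $\omega=\sum_{i=1}^k m_i\otimes\xi_i\in\M\otimes_NQ$ with $\dim_NQ=0$. For each $i$, zero-dimensionality yields a projection $p_i\in N$ fixing $\xi_i$ whose trace is small enough that (ii) produces a projection $q_i\in N$ with $q_im_ip_i=m_ip_i$ and $\tau(q_i)<\eps/k$. Rewriting $m_i\otimes\xi_i=m_ip_i\otimes\xi_i$ shows $q_i$ fixes $m_i\otimes\xi_i$, so the join $r=q_1\vee\cdots\vee q_k$ fixes $\omega$ and satisfies $\tau(r)\le\sum_i\tau(q_i)<\eps$. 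Hence $[\omega]_N=0$ for every $\omega$, i.e. $\dim_N(\M\otimes_NQ)=0$.

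The main work is (iii) $\Rightarrow$ (ii), which I would prove contrapositively by constructing a zero-dimensional $N$-module whose tensor product with $\M$ is not zero-dimensional. Failure of (ii) gives $m\in\M$, $\eps>0$ and projections $p_n\in N$ with $\tau(p_n)\to0$ but $[mp_n]_N\ge\eps$. I would first pass to a subsequence with $\sum_n\tau(p_n)<\infty$ and replace $p_n$ by $\tilde p_n=\bigvee_{k\ge n}p_k$; these decrease, still satisfy $\tau(\tilde p_n)\to0$, and obey $[m\tilde p_n]_N\ge\eps$, since $\tilde p_np_n=p_n$ forces any projection fixing $m\tilde p_n$ to fix $mp_n$. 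Writing $p_n$ again for $\tilde p_n$, I form the mapping telescope $Q=\varinjlim\bigl(N\xrightarrow{\cdot p_1}N\xrightarrow{\cdot p_2}N\to\cdots\bigr)$, whose connecting maps are right multiplication by $p_n$ and hence left $N$-linear. Because the $p_n$ are nested, the composite from stage $1$ to stage $j$ is right multiplication by $p_1\cdots p_{j-1}=p_{j-1}$, with image $Np_{j-1}$ of dimension $\tau(p_{j-1})\to0$, so the inductive-limit dimension formula gives $\dim_NQ=0$. On the other hand $\M\otimes_NQ=\varinjlim(\M\xrightarrow{\cdot p_1}\M\to\cdots)$, and the image of $m$ in this limit has $[\,\cdot\,]_N=\inf_j[mp_{j-1}]_N\ge\eps>0$; thus $\M\otimes_NQ$ contains an element of positive $[\,\cdot\,]_N$ and is not zero-dimensional, contradicting (iii).

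The step I expect to be most delicate is the dimension bookkeeping in the telescope: one must verify that in $\varinjlim$ a projection $q\in N$ fixes the class of $m$ precisely when it fixes $mp_{j-1}$ at some finite stage (using that right multiplication commutes with the left action of $q$), so that $[\,\cdot\,]_N$ of the class equals $\inf_j[mp_{j-1}]_N$ and the bound $\ge\eps$ survives. The nestedness reduction is exactly what makes the telescope collapse cleanly to the $p_{j-1}$; it is what prevents the generator from being approximated and so avoids the collapse that ruins the naive cyclic candidate $N/\sum_nN(1-p_n)$, for which $1-p_n\to1$ in $d_N$ forces the tensored module to vanish.
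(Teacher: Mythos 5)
Your proof is correct, and while the easy parts follow the paper, the key implication is argued by a genuinely different construction. The equivalence (i)$\Leftrightarrow$(ii) and the implication (ii)$\Rightarrow$(iii) are as in the paper — the paper compresses (ii)$\Rightarrow$(iii) into the remark that ``a similar computation'' handles $m\otimes\xi$, and your join argument $r=q_1\vee\dots\vee q_k$ with $\tau(r)\le\sum_i\tau(q_i)$ is precisely the detail needed to pass from elementary tensors to general elements of $\M\otimes_NQ$. The divergence is in (iii)$\Rightarrow$(ii). After passing to a subsequence with $\sum_n\tau(p_n)<\infty$, the paper takes $Q=(\prod_n Np_n)/(\oplus_n Np_n)$: zero-dimensionality follows from the tail estimate $[\xi]_N\le\sum_{n\ge k}\tau(p_n)$, and the contradiction is extracted by considering $\xi=m\otimes(p_n)_n$, finding a projection $p$ of trace $<\eps$ with $\xi-p\xi$ in the image of $\M\otimes_N(\oplus_n Np_n)$, projecting onto the $k$-th factor for $k$ large, and using $\M\otimes_N Np_k=\M p_k$ to read off $[mp_k]_N\le\tau(p)<\eps$. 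You instead first replace the $p_n$ by the nested tails $\bigvee_{k\ge n}p_k$ (which keeps the traces tending to $0$ and, as you check, preserves the lower bound $\ge\eps$), and then form the telescope $Q=\varinjlim\bigl(N\xrightarrow{\cdot p_1}N\xrightarrow{\cdot p_2}\cdots\bigr)$; nestedness collapses the composite connecting maps to $\cdot\, p_{j-1}$, so the inductive-limit dimension formula from the paper's first section gives $\dim_NQ=0$ at once, and since tensoring commutes with colimits, $\M\otimes_NQ=\varinjlim(\M\to\M\to\cdots)$, in which a projection $q$ fixes the class of $m$ if and only if $qmp_{j-1}=mp_{j-1}$ at some finite stage, whence that class has rank $\inf_j[mp_{j-1}]_N\ge\eps$, contradicting (iii) via the zero-dimensionality criterion. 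Both constructions are sound; the telescope buys a transparent rank computation in the tensored module (no choice of representatives, no factor-projection step, no appeal to $\M\otimes_N Np_k=\M p_k$), at the cost of the preprocessing with joins, whereas the paper's product-modulo-sum module needs no modification of the $p_n$ but a somewhat fiddlier extraction of the contradiction. Your closing observation that the naive cyclic candidate $N/\sum_n N(1-p_n)$ is useless here (the class of $m$ equals the class of $mp_n$ for every $n$, so every element of the tensored module has rank $0$) is also correct and explains why a device of this kind is needed.
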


\bp Applying (i) to $Q=\M$ we immediately get (ii), so
(i)$\Rightarrow$(ii). Conversely, assume (ii) is satisfied. Let
$Q\in\M\Mod$, $m\in\M$ and $\eps>0$. Choose $\delta>0$ as in (ii).
Then if $\xi\in Q$ and $[\xi]_N<\delta$, we can find a projection
$p\in N$ with $p\xi=\xi$ and $\tau(p)<\delta$, and get
$$
[m\xi]_N=[mp\xi]_N\le [mp]_N<\eps.
$$
Furthermore, a similar computation shows
that $[m\otimes\xi]_N<\eps$. Thus (ii)$\Rightarrow$(i) and
(ii)$\Rightarrow$(iii).

It remains to show that (iii) implies (ii). Assume (ii) is not true.
Then there exist $m\in\M$, $\eps>0$ and a sequence of projections
$p_n\in N$ such that $\tau(p_n)\to0$ but $[mp_n]_N\ge\eps$ for all $n$.
Passing to a subsequence we may assume that $\sum_n\tau(p_n)<\infty$.
Consider the $N$-module $Q=(\prod_n Np_n)/(\oplus_n Np_n)$. Observe
that if $\xi=(\xi_n)_{n\ge k}\in\prod_{n\ge k}Np_n$ then
$$
[\xi]_N\le\sum_{n=k}^\infty[\xi_n]_N\le \sum_{n=k}^\infty\tau(p_n).
$$
This implies that $\dim_NQ=0$. So assuming (iii) we have
$\dim_N(\M\otimes_N Q)=0$. In particular, by considering the image of
$\xi:=m\otimes(p_n)_n\in\M\otimes_N(\prod_nNp_n)$ in $\M\otimes_NQ$, we
can find a projection $p\in N$ such that $\tau(p)<\eps$ and $\xi-p\xi$
lies in the image of $\M\otimes_N(\oplus_nNp_n)$. By considering the
projection $\prod_nNp_n\to Np_k$ onto the $k$-th factor we conclude
that $m\otimes p_k=pm\otimes p_k\in \M\otimes_N Np_k$ for all $k$
sufficiently large. As $\M\otimes_N Np_k=\M p_k$, this shows that
$[mp_k]_N=[pmp_k]_N\le\tau(p)<\eps$ for all $k$ big enough. This
contradicts our choice of the sequence $\{p_n\}_n$. The contradiction
shows that (iii)$\Rightarrow$(ii). \ep

Under the equivalent conditions of the above lemma, the
multiplication by $m\in\M$ on $Q\in\M\Mod$ extends by continuity to
a map on $c_N(Q)$. Therefore the functor $c_N$ of rank completion on
$N\Mod$ defines a functor $\M\Mod\to\M\Mod$ which we denote,
slightly abusing notation, by the same symbol $c_N$. It follows from
\cite{Th} that if $P$ is a projective $\M$-module then $c_N(P)$ is
projective in the category $\M\Mod_c$ of $N$-complete $\M$-modules,
so that if $Q\in\M\Mod_c$ then any surjective morphism $h\colon Q\to
c_N(P)$ has a right inverse. Indeed, the completion morphism $P\to
c_N(P)$ lifts to a morphism $s\colon P\to Q$ by projectivity of $P$,
and then $c_N(s)\colon c_N(P)\to c_N(Q)=Q$ is a right inverse of
$h$. It follows that any exact sequence of $\M$-modules of the form
$0\leftarrow c_N(P_0)\leftarrow c_N(P_1)\leftarrow\dots$, where
the~$P_n$ are projective $\M$-modules, is split-exact.

\begin{lemma} \label{liso}
Let $N\subset\M\subset M$ be a triple of algebras such that $N$ and $M$
are finite von Neumann algebras and the pair $N\subset\M$ satisfies the
equivalent conditions of Lemma~\ref{lcomp}. Then any morphism $Q_1\to
Q_2$ of $\M$-modules which is a $\dim_N$-isomorphism induces a
$\dim_M$-isomorphism $\Tor^\M_n(M,Q_1)\to\Tor^\M_n(M,Q_2)$ for all
$n\ge0$.
\end{lemma}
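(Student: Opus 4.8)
The plan is to isolate a single analytic statement and feed it into a purely homological reduction. The analytic statement (call it Step~A) is that for \emph{every} $\M$-module $Q$ the completion map $\iota_Q\colon Q\to c_N(Q)$ induces a $\dim_M$-isomorphism $\mathrm{id}_M\otimes\iota_Q\colon M\otimes_\M Q\to M\otimes_\M c_N(Q)$. I expect Step~A, and specifically the injectivity part (the vanishing of the kernel in dimension), to be the main obstacle; the remaining arguments are formal.

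First I would record the reduction to a zero-dimensional module. Factoring $h$ as $Q_1\twoheadrightarrow\image h\hookrightarrow Q_2$ gives short exact sequences whose kernel $\ker h$ and cokernel $\operatorname{coker}h$ have $\dim_N$ equal to zero. The long exact sequence for $\Tor^\M_\bullet(M,-)$ together with additivity of $\dim_M$ then shows that $\Tor^\M_n(M,h)$ is a $\dim_M$-isomorphism for all $n$, provided one knows that $\dim_M\Tor^\M_n(M,K)=0$ whenever $\dim_NK=0$. (This vanishing is in fact equivalent to the lemma, as one sees by taking $h\colon K\to0$.)

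Next I would deduce this vanishing from Step~A. If $\dim_NK=0$ then $c_N(K)=0$ by Lemma~\ref{lThom}, so for a projective resolution $P_\bullet\to K$ the complex $c_N(P_\bullet)$ is an exact complex of objects projective in $\M\Mod_c$; by the discussion following Lemma~\ref{lcomp} it is split exact, whence $M\otimes_\M c_N(P_\bullet)$ is exact. The completion maps assemble into a chain map $\phi\colon M\otimes_\M P_\bullet\to M\otimes_\M c_N(P_\bullet)$ which by Step~A is a $\dim_M$-isomorphism in each degree. Applying $c_M$, which is exact and therefore commutes with homology (Lemma~\ref{lThom}), turns $\phi$ into an isomorphism of complexes, so $c_M\bigl(\Tor^\M_n(M,K)\bigr)\cong H_n\bigl(c_M(M\otimes_\M c_N(P_\bullet))\bigr)=0$, i.e. $\dim_M\Tor^\M_n(M,K)=0$.

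It remains to carry out Step~A, where the two halves are handled by the same device. The cokernel of $\mathrm{id}_M\otimes\iota_Q$ is $M\otimes_\M\operatorname{coker}\iota_Q$ with $\operatorname{coker}\iota_Q$ zero-dimensional, and any element $\sum_i m_i\otimes\zeta_i$ in which each $\zeta_i$ is fixed by a small projection $q_i\in N$ can be rewritten as $\sum_i m_iq_i\otimes\zeta_i$; Lemma~\ref{lcomp}(ii) makes $[m_iq_i]_N$ small, so each summand, and hence the whole element (after taking the join of the resulting projections in $N\subset M$), is fixed by a projection of arbitrarily small trace. For the kernel I would take $x=\sum_i m_i\otimes\xi_i$ with $\mathrm{id}_M\otimes\iota_Q(x)=0$, use the standard criterion for vanishing in $M\otimes_\M c_N(Q)$ to produce $c_{ij}\in\M$ and $w_j\in c_N(Q)$ with $\iota_Q(\xi_i)=\sum_j c_{ij}w_j$ and $\sum_i m_ic_{ij}=0$, and approximate the $w_j$ in $d_N$ by $\iota_Q(\eta_j^{(n)})$ with $\eta_j^{(n)}\in Q$. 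Setting $\xi_i^{(n)}=\sum_j c_{ij}\eta_j^{(n)}$, the relations $\sum_i m_ic_{ij}=0$ force $\sum_i m_i\otimes\xi_i^{(n)}=0$ in $M\otimes_\M Q$, while continuity of multiplication by the $c_{ij}$ (Lemma~\ref{lcomp}(i)) makes $\xi_i-\xi_i^{(n)}$ zero-dimensional for $n$ large. Thus $x=\sum_i m_i\otimes(\xi_i-\xi_i^{(n)})$ is of the form already treated for the cokernel, and is fixed by a small projection. The crux is precisely this last maneuver: rewriting an arbitrary kernel element, modulo an identically vanishing tensor, as a finite sum of $\dim_N$-small elementary tensors, after which the join-of-projections argument applies.
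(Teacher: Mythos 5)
Your homological skeleton is exactly the paper's: the same reduction via the two short exact sequences and long exact $\Tor$-sequences to the vanishing statement for modules with $\dim_NK=0$, and the same derivation of that vanishing from split-exactness of $c_N(P_\bullet)$ together with exactness of $c_M$. The difference lies entirely in Step~A, and there your route is genuinely different. The paper never analyzes the kernel of $\mathrm{id}_M\otimes\iota_Q$ at all: it shows the image of $M\otimes_\M Q$ is $M$-dense in $M\otimes_\M c_N(Q)$, giving a surjection $c_M(M\otimes_\M Q)\to c_M(M\otimes_\M c_N(Q))$, and then constructs an explicit inverse by sending $\sum_k m_k\otimes\{\xi^k_n\}_n$, for Cauchy sequences $\{\xi^k_n\}_n$ in $Q$, to the class of the Cauchy sequence $\sum_k m_k\otimes\xi^k_n$ in $c_M(M\otimes_\M Q)$; Lemma~\ref{lThom}(ii) then yields the $\dim_M$-isomorphism. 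You instead verify directly that kernel and cokernel have dimension zero, handling the kernel by the standard vanishing criterion in tensor products plus approximation of the witnesses $w_j$; the point that makes this work is the one you flag, namely that the number of elementary tensors stays fixed while the ranks shrink, so the join of the supporting projections has small trace. This is a legitimate, more element-theoretic alternative; the paper's inverse construction packages the same approximation into a single map and thereby avoids the kernel chase.

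Two local repairs are needed. First, in the cokernel step (and implicitly again at the end of the kernel step) you justify smallness of $[m_iq_i]$ by Lemma~\ref{lcomp}(ii); but that condition concerns $m\in\M$, whereas your $m_i$ lie in $M$, and the rank you need is over $M$, not over $N$. The correct justification is the inequality the paper itself uses for its density claim: $[mq]_M\le\tau(q)$ for any $m\in M$ and any projection $q\in N\subset M$, because in the finite von Neumann algebra $M$ the left support of $mq$ has the same trace as its right support, which is dominated by $q$; no appeal to Lemma~\ref{lcomp} is needed or available here. Second, ``zero-dimensional for $n$ large'' should read ``of arbitrarily small rank'': continuity of multiplication by the $c_{ij}$ gives $[\iota_Q(\xi_i-\xi_i^{(n)})]_N\to0$, and to transfer this to $[\xi_i-\xi_i^{(n)}]_N$ one should note that $[\zeta]_N\le[\iota_Q(\zeta)]_N$ for $\zeta\in Q$: if $p\,\iota_Q(\zeta)=\iota_Q(\zeta)$ then $\iota_Q((1-p)\zeta)=0$, so $(1-p)\zeta$ has rank zero, and hence $\zeta$ is fixed by $p\vee q$ with $\tau(q)$ arbitrarily small. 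With these two fixes your Step~A, and hence the whole proof, is correct.
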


\bp This is proved in~\cite{S} and in a different form in \cite{Th}. We
shall nevertheless sketch a proof for the reader's
convenience.

\smallskip

Consider the case $n=0$. It suffices to show that for any $\M$-module
$Q$ the completion map $Q\to c_N(Q)$ induces a $\dim_M$-isomorphism
$M\otimes_\M Q\to M\otimes_\M c_N(Q)$. Since $[mp]_M\le\tau(p)$ for any
$m\in M$ and any projection $p\in M\supset N$, we have
$[m\otimes\xi]_M\le[\xi]_N$. Hence the image of $M\otimes_\M Q$ is
$M$-dense in $M\otimes_\M c_N(Q)$, and we get a surjective morphism
\begin{equation} \label{ecomp}
c_M(M\otimes_\M Q)\to c_M(M\otimes_\M c_N(Q)).
\end{equation}
On the other hand, if $\{\xi_n^k\}_n$, $k=1,\dots,l$, are Cauchy
sequences in $Q$, then for any $m_1,\dots,m_l\in M$ the sequence
$\xi_n=\sum^l_{k=1}m_k\otimes\xi_n^k$ is Cauchy in $M\otimes_\M Q$
(in the pseudometric $d_M$), so it defines an element of
$c_M(M\otimes_\M Q)$. Moreover, if $[\xi_n^k]_N\to0$ as $n\to\infty$
for all $k$ then $[\xi_n]_M\to0$. Since $c_N(Q)$ is the quotient of
the module of Cauchy sequences by the submodule of sequences
converging to zero, we therefore get a well-defined map
$M\otimes_\M c_N(Q)\to c_M(M\otimes_\M Q)$ with $M$-dense image, and
hence a surjective morphism
$$
c_M(M\otimes_\M c_N(Q))\to c_M(M\otimes_\M Q).
$$
Clearly, it is the inverse of \eqref{ecomp}.

\smallskip

Turning to the general case, consider first an $\M$-module $Q$ such
that $\dim_NQ=0$. We have to show that $\dim_M\Tor^\M_n(M,Q)=0$ for all
$n\ge0$. Consider a projective resolution $0\leftarrow Q\leftarrow
P_\bullet$. Since $c_N(Q)=0$, the complex $0\leftarrow c_N(P_\bullet)$
is exact. By the remark before the lemma, it is therefore split-exact.
It follows that $0\leftarrow M\otimes_\M c_N(P_\bullet)$ is exact. On
the other hand, by the first part of the proof this complex is
$\dim_M$-isomorphic to the complex $0\leftarrow M\otimes_\M P_\bullet$.
Since $\Tor^\M_n(M,Q)\cong H_n(M\otimes_\M P_\bullet)$, we conclude that
$\dim_M\Tor^\M_n(M,Q)=0$ for all $n\ge0$.

Finally, for an arbitrary morphism $h\colon Q_1\to Q_2$ of
$\M$-modules which is a $\dim_N$-isomorphism, consider the short
exact sequences
$$
0\to\ker h\to Q_1\to\operatorname{im} h\to0\ \ \hbox{and}\ \
0\to\operatorname{im} h\to Q_2\to\operatorname{coker} h\to0
$$
and the corresponding long exact sequences of $\Tor$-groups. Since
$\dim_M\Tor^\M_n(M,\ker h)=0$ and
$\dim_M\Tor^\M_n(M,\operatorname{coker} h)=0$ for all $n$, we then see
that $\Tor_n^\M(M,Q_1)$ and $\Tor_n^\M(M,Q_2)$ are $\dim_M$-isomorphic.
\ep


\begin{lemma} \label{lres}
Let $N\subset\M\subset M$ be a triple of algebras such that $N$ and $M$
are finite von Neumann algebras. Assume $0\leftarrow Q\leftarrow
P_\bullet$ is a resolution of an $\M$-module $Q$ such that
$\dim_M\Tor^\M_n(M,P_k)=0$ for all $n\ge1$ and $k\ge0$. Then
$$
\dim_M\Tor^\M_n(M,Q)=\dim_M H_n(M\otimes_\M P_\bullet)\ \ \hbox{for all}\ \
n\ge0.
$$
In particular, if the pair $N\subset\M$ satisfies the equivalent
conditions of Lemma~\ref{lcomp}, then to compute
$\dim_M\Tor^\M_\bullet(M,Q)$ one can use any resolution of $Q$ by
$\M$-modules that contain $N$-dense projective $\M$-submodules.
\end{lemma}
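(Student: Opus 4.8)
The plan is to compare the two hyperhomology spectral sequences of the right-exact functor $F=M\otimes_\M-$ applied to the complex $P_\bullet$. Choosing a Cartan--Eilenberg resolution of $P_\bullet$ by projective $\M$-modules and forming the associated double complex, the total homology carries two filtrations, giving two spectral sequences that both converge to the hyper-derived functors $\mathbb{L}_nF(P_\bullet)$. Since $P_\bullet$ is a resolution of $Q$, the second spectral sequence has $E^2$-page $\Tor^\M_p(M,H_q(P_\bullet))$, which vanishes for $q\neq0$ and equals $\Tor^\M_p(M,Q)$ for $q=0$; it therefore collapses and identifies $\mathbb{L}_nF(P_\bullet)$ with $\Tor^\M_n(M,Q)$. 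The first spectral sequence has $E^2$-page ${}'E^2_{p,q}=H_p(\Tor^\M_q(M,P_\bullet))$, whose row $q=0$ is $H_p(M\otimes_\M P_\bullet)$ and whose rows $q\ge1$ consist of subquotients of the dimension-zero modules $\Tor^\M_q(M,P_p)$.

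The heart of the argument, and the step I expect to be the main obstacle, is to push this comparison through the spectral sequence at the level of dimensions. The hypothesis $\dim_M\Tor^\M_q(M,P_p)=0$ for $q\ge1$, together with monotonicity of $\dim_M$ under subquotients, forces $\dim_M{}'E^2_{p,q}=0$ for every $p$ and every $q\ge1$, and hence $\dim_M{}'E^\infty_{p,q}=0$ there as well. On the row $q=0$ the only nonzero higher differentials are those leaving ${}'E^r_{p,0}$ into the rows $q\ge1$ (the incoming differentials originate in negative rows and vanish), so by additivity of $\dim_M$ each passage to the next page preserves the dimension of the $q=0$ entry; thus $\dim_M{}'E^\infty_{p,0}=\dim_M{}'E^2_{p,0}=\dim_MH_p(M\otimes_\M P_\bullet)$. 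Finally, since we work in the first quadrant the filtration of $\mathbb{L}_nF(P_\bullet)$ is finite, so additivity of $\dim_M$ over its subquotients gives
\[
\dim_M\Tor^\M_n(M,Q)=\dim_M\mathbb{L}_nF(P_\bullet)=\sum_{p+q=n}\dim_M{}'E^\infty_{p,q}=\dim_MH_n(M\otimes_\M P_\bullet),
\]
which is the asserted equality. Alternatively, one can avoid spectral sequences by a dévissage: splitting $P_\bullet$ into the short exact sequences $0\to K_{k+1}\to P_k\to K_k\to0$ with $K_0=Q$ and feeding them into the long exact $\Tor$-sequences, the hypothesis makes the connecting maps $\dim_M$-isomorphisms and shifts $\dim_M\Tor^\M_n(M,Q)$ down to $\dim_M\Tor^\M_1(M,K_{n-1})$, which a final long exact sequence identifies with $\dim_MH_n(M\otimes_\M P_\bullet)$.

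For the last assertion I would first check that if $P'_k\subset P_k$ is an $N$-dense projective $\M$-submodule then the inclusion $P'_k\to P_k$ is a $\dim_N$-isomorphism. Its kernel is zero, and $\dim_N(P_k/P'_k)=0$ by the criterion of zero dimensionality: given a class in $P_k/P'_k$ and $\eps>0$, $N$-density lets one approximate a representative in $P_k$ by an element of $P'_k$ up to a projection $p\in N$ with $\tau(p)<\eps$, and this same $p$ then fixes the class (here one uses that $P'_k$ is an $\M$-submodule). Since the pair $N\subset\M$ satisfies Lemma~\ref{lcomp}, Lemma~\ref{liso} upgrades this $\dim_N$-isomorphism to $\dim_M$-isomorphisms $\Tor^\M_n(M,P'_k)\to\Tor^\M_n(M,P_k)$ for all $n\ge0$. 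As $P'_k$ is projective, $\Tor^\M_n(M,P'_k)=0$ for $n\ge1$, so $\dim_M\Tor^\M_n(M,P_k)=0$ for $n\ge1$ and all $k$; the hypothesis of the first part is then met and the conclusion follows.
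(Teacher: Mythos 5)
Your proof is correct, and your treatment of the second assertion coincides with the paper's (the paper also deduces it from Lemma~\ref{liso} applied to the inclusion $\tilde P\subset P$ of an $N$-dense projective submodule, plus $\Tor^\M_n(M,\tilde P)=0$ for $n\ge1$; your explicit verification via the zero-dimensionality criterion, using that the approximating projection fixes the class in $P/\tilde P$ because $\tilde P$ is a submodule, is exactly the justification the paper leaves implicit). For the first, main assertion, however, you take a genuinely different route. The paper never runs a spectral-sequence or d\'evissage argument at the level of dimensions: it introduces the functor $F=c_M(M\otimes_\M\,\cdot\,)$ and uses exactness of the rank completion $c_M$ (Lemma~\ref{lThom}(iii)) to get $L_nF=c_M(\Tor^\M_n(M,\cdot))$, so that the hypothesis $\dim_M\Tor^\M_n(M,P_k)=0$ becomes honest vanishing $L_nF(P_k)=0$ by Lemma~\ref{lThom}(ii); the classical theorem on $F$-acyclic resolutions then gives $c_M(\Tor^\M_n(M,Q))\cong c_M(H_n(M\otimes_\M P_\bullet))$, and Lemma~\ref{lThom}(i) converts this into the dimension equality. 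Your argument instead unfolds the homological algebra behind that classical theorem (the two hyperhomology spectral sequences of a Cartan--Eilenberg resolution, or equivalently the d\'evissage into short exact sequences) and does the bookkeeping with $\dim_M$ directly: entries in rows $q\ge1$ are subquotients of the zero-dimensional modules $\Tor^\M_q(M,P_p)$, incoming differentials to the bottom row vanish, outgoing ones have zero-dimensional image, so additivity gives $\dim_M E^\infty_{p,0}=\dim_M E^2_{p,0}$, and the finite filtration of the abutment finishes the count; all of these dimension manipulations are legitimate consequences of additivity of L\"uck's dimension function, including in the infinite-dimensional case. What your route buys is independence from the rank-completion machinery: you never need exactness of $c_M$, which the paper itself points out is the only part of Lemma~\ref{lThom} requiring proof. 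What the paper's route buys is brevity and a slightly stronger conclusion --- an isomorphism of completions $c_M(\Tor^\M_n(M,Q))\cong c_M(H_n(M\otimes_\M P_\bullet))$ rather than mere equality of dimensions --- while reusing a functor that is already set up for the rest of the paper.
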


\bp Consider the functor $F=c_M(M\otimes_\M\, \cdot)\colon\M\Mod\to
M\Mod$. Since $c_M$ is exact, for the derived functors of $F$ we
have $L_nF=c_M\circ
L_n(M\otimes_\M\,\cdot)=c_M(\Tor_n^\M(M,\cdot))$. Therefore the
assumption of the lemma says that $0\leftarrow Q\leftarrow
P_\bullet$ is an $F$-acyclic resolution of $Q$. Hence
$$
c_M(\Tor_n^\M(M,Q))=L_nF(Q)\cong H_n(F(P_\bullet))
=H_n(c_M(M\otimes_\M P_\bullet))\cong c_M(H_n(M\otimes_\M P_\bullet)),
$$
which proves the first part of the lemma.

The second part follows from Lemma~\ref{liso}, since if an $\M$-module
$P$ contains an $N$-dense projective submodule $\tilde P$, then by that
lemma the modules $\Tor_n^\M(M,P)$ and $\Tor_n^\M(M,\tilde P)=0$ (for
$n\ge1$) are $\dim_M$-isomorphic. \ep

The following remarks will not be used later, but may be of independent
interest.

\begin{remark} \mbox{\ }

\noindent (i) Lemma~\ref{liso} can be strengthened as follows.
Assume $M$ is a finite von Neumann algebra, $N\subset\M$ is a pair
satisfying the equivalent conditions of Lemma~\ref{lcomp}, and $R$
is an $M$-$\M$-bimodule satisfying the following equivalent (by an
analogue of Lemma~\ref{lcomp}) conditions:
\begin{itemize}
\item[--] for any $r\in R$ and $\eps>0$ there exists $\delta>0$ such
that if $p\in N$ is a projection with $\tau(p)<\delta$ then
$[rp]_M<\eps$; \item[--] if $Q\in N\Mod$ is such that $\dim_NQ=0$
then $\dim_M(R\otimes_NQ)=0$.
\end{itemize}
Then any morphism $Q_1\to Q_2$ of $\M$-modules which is a
$\dim_N$-isomorphism induces a $\dim_M$-isomorphism
$\Tor^\M_n(R,Q_1)\to\Tor^\M_n(R,Q_2)$ for all $n\ge0$. The proof is
essentially the same as above. This is \cite[Lemma 4.10]{S}, but we
see that the flatness assumption there is not needed.

\medskip\noindent
(ii) Lemma~\ref{lres} provides a mildly alternative route to
\cite[Theorem 4.11]{S}, which is a key point in Sauer's approach to
Gaboriau's theorem on the $L^2$-Betti numbers of orbit equivalent
groups. Namely, assume $N\subset\NN\subset\M\subset M$ is a quadruple
of algebras such that $N$ and $M$ are finite von Neumann algebras,
$\NN$ is $N$-dense in $\M$ and the pair $N\subset\M$ satisfies the
equivalent conditions of Lemma~\ref{lcomp}. Then
$$
\dim_M\Tor^\M_n(M,Q)=\dim_M\Tor^\NN_n(M,Q)
$$
for any $\M$-module $Q$ and all $n\ge0$. Indeed, to compute
$\Tor^\M_n(M,Q)$ we use a resolution $0\leftarrow Q\leftarrow
P_\bullet$ of $Q$ by free $\M$-modules. Since $\NN$ is $N$-dense in
$\M$, by Lemma~\ref{lres} this resolution can also be used to
compute $\dim_M\Tor^\NN_n(M,Q)$. Thus we only need to check that the
canonical map $M\otimes_\NN P_\bullet\to M\otimes_\M P_\bullet$ is a
$\dim_M$-isomorphism. Since $M\otimes_\NN P=M\otimes_\M(\M\otimes_\NN
P)$, by Lemma~\ref{liso} it is enough to check that the map
$h\colon\M\otimes_\NN P\to P$, $m\otimes\xi\mapsto m\xi$, is a
$\dim_N$-isomorphism for any $\M$-module $P$. But this is clear,
since the $N$-module map $P\to\M\otimes_\NN P$, $\xi\mapsto
1\otimes\xi$, is a right inverse to $h$ and has $N$-dense image by
virtue of density of $\NN$ in $\M$.
\end{remark}

\bigskip

\section{$L^2$-Betti numbers}

Let $X$ be a standard Borel space, $R\subset X\times X$ a countable
Borel equivalence relation on $X$ preserving a probability measure
$\mu$. The measure $\mu$ will usually be omitted in our notation,
e.g. we write $L^\infty(X)$ instead of $L^\infty(X,\mu)$. As usual
denote by $[R]$ the group of invertible Borel transformations of $X$
with graphs in $R$.

A standard fiber space over $X$ is a standard Borel space $U$
together with a Borel map $\pi\colon U\to X$ with at most countable
fibers. There is then a natural measure $\nu_U$ on $U$ given by
\[
\nu_U(C)=\int_X \#(\pi^{-1}(x)\cap C)d\mu(x).
\]
The example that we will be the most concerned with in the following
is that where $U=R$ and $\pi$ is either $\pi_l$ or $\pi_r$, the
projections onto the first and second coordinates respectively.
Since $\mu$ is invariant, the induced measures on $R$ are the same,
denoted simply by $\nu$.

Given two standard fiber spaces over $X$, $(U,\pi)$ and $(V,\pi')$,
their fiber product is
\[
U*V = \{(u,v)\in U\times V \mid \pi(u)=\pi'(v)\},
\]
which is again a standard fiber space.

A left $R$-action on a standard fiber space $U$ over $X$ is a Borel
map $(R,\pi_r)*U\to U$ denoted $((x,y),u)\mapsto (x,y)u$, where
$y=\pi(u)$, satisfying
\[
(x,y)((y,z)u)=(x,z)u,\qquad (z,z)u=u
\]
whenever this makes sense. This implies that $\pi((y,z)u)=y$, and
that $(x,y)$ is a bijection between $\pi^{-1}(y)$ and $\pi^{-1}(x)$.

Consider the subspace $\CR$ of $L^\infty(R,\nu)$ consisting of
functions that are supported on finitely many graphs of
$\phi\in[R]$. Equivalently, a function $f\in L^\infty(R,\nu)$
belongs to $\CR$ if
\[
x\mapsto \#\{y\mid f(x,y)\neq 0\} + \#\{y\mid f(y,x)\neq 0\}
\]
is in $L^\infty(X)$. Then $\CR$ is an involutive algebra with
product
\[
(fg)(x,z)=\sum_{y\sim x}f(x,y)g(y,z)
\]
and involution $f^*(x,y)=\overline{f(y,x)}$.

If $(U,\pi)$ is a standard fiber space over $X$, denote by
$\Gamma(U)$ the space of Borel functions $f$ on $U$, considered
modulo sets of $\nu_U$-measure zero, such that the support of
$f|_{\pi^{-1}(x)}$ is finite for a.e. $x\in X$. Furthermore, denote
by $\Gamma^{b}(U)$ the space of functions $f\in \Gamma(U)\cap
L^\infty(U,\nu_U)$ such that
$$
x\mapsto \#\{u\in\pi^{-1}(x)\mid f(u)\neq 0\}
$$
is in $L^\infty(X)$. We shall also denote the space $L^2(U,d\nu_U)$
by $\Gamma^{(2)}(U)$. If $R$ acts on $U$ then all three spaces
$\Gamma(U)$, $\Gamma^b(U)$ and $\Gamma^{(2)}(U)$ are $\CR$-modules
in a natural way. In particular, if $(U,\pi)=(R,\pi_l)$, we get a
representation of $\CR$ on $L^2(R,d\nu)$, and we let $\LR$ be the
von Neumann algebra generated by $\CR$ in this representation. The
characteristic function $\chi_\Delta$ of the diagonal $\Delta\subset
R$ is a cyclic and separating vector for $\LR$ defining a normal
tracial state $\tau$ on $\LR$, so that $L^2(R,d\nu)=L^2(\LR,\tau)$.
Note also that for $U=X$ we have $\Gamma^b(U)=L^\infty(X)$,
$\Gamma^{(2)}(U)=L^2(X,d\mu)$ and $\Gamma(U)=M(X)$, the space of
measurable functions on $X$. In particular, $L^\infty(X)$,
$L^2(X,d\mu)$ and $M(X)$ are left $\CR$-modules.

The results of the previous section will be applied to the triple
$L^\infty(X)\subset\CR\subset \LR$, where $L^\infty(X)$ is
identified with $L^\infty(\Delta,\nu)$. The equivalent conditions of
Lemma~\ref{lcomp} are satisfied. Indeed, if $f\in\CR$ is 
supported on the graph of $\phi\in[R]$ then
$f\chi_Z=\chi_{\phi^{-1}(Z)}f$ for any Borel $Z\subset X$, so that
$$
[f\chi_Z]_{\LR}=[\chi_{\phi^{-1}(Z)}f]_{\LR}\le\mu(\phi^{-1}(Z))
=\mu(Z)=\tau(\chi_Z),
$$
and thus condition (ii) in Lemma~\ref{lcomp} is satisfied for $m=f$
with $\delta=\eps$.

For a general standard fiber space $U$ with an $R$-action the
$\CR$-module structure on $\Gamma^{(2)}(U)$ does not extend to an
action of $\LR$. But it extends for the following class of spaces.
An $R$-action on~$U$ is called discrete if there is a Borel
fundamental domain, that is, if there is a Borel set $F\subset U$
intersecting each $R$-orbit once and only once. For the case
$(U,\pi)=(R,\pi_l)$, the diagonal $\Delta$ is a fundamental domain for
the standard $R$-action. For general discrete $R$-spaces $U$, by
choosing sections of $F\to X$ we can embed~$U$ into
$\bigsqcup^\infty_{n=1}R$, see \cite[Lemma~2.3]{Ga1}, that is, any
discrete $R$-space is $R$-equivariantly isomorphic to
$\bigsqcup^N_{n=1}R\Delta(X_n)$, where $N\in\N\cup\{+\infty\}$, the
$X_n$ are Borel subsets of~$X$, $\Delta(X_n)=\{(x,x)\mid x\in X_n\}$
and therefore $R\Delta(X_n)=\{(y,x)\mid x\in X_n,\ y\sim x\}$. In
particular, the $\CR$-module $\Gamma^{(2)}(U)$ is isomorphic to
$\oplus_nL^2(\LR)\chi_{X_n}$, and hence the action of $\CR$ on it
extends to an action of $\LR$.

If $U$ is a discrete $R$-space and $F\subset U$ is a Borel
fundamental domain, then we write $\nu_U(R\backslash U)$
for~$\nu_U(F)$. If $U\cong\bigsqcup_nR\Delta(X_n)$ then
$\nu_U(R\backslash U)=\sum_n\mu(X_n)$. Since
$\dim_{\LR}L^2(\LR)\chi_Z=\tau(\chi_Z)=\mu(Z)$, we also get
$$
\nu_U(R\backslash U)=\dim_{\LR}\Gamma^{(2)}(U).
$$

If $U$ and $V$  are standard fiber spaces with $R$-action, then
$U*V$ is again a standard fiber space with diagonal action of $R$.
Furthermore, if $F$ is a Borel fundamental domain for $U$, then
$F*V$ is a Borel fundamental domain for $U*V$.

\medskip

A simplicial $R$-complex $\Sigma$ consists of a discrete $R$-space
$\Sigma^0$ and Borel sets $\Sigma^1,\Sigma^2,\ldots$ with
\[
\Sigma^n\subset\overbrace{\Sigma^0*\cdots *\Sigma^0}^{n+1}
\]
satisfying, for $n>0$, \enu{i} $R\Sigma^n=\Sigma^n$; \enu{ii} if
$(v_0,\ldots,v_n)\in\Sigma^n$ then
$(v_{\sigma(0)},\ldots,v_{\sigma(n)})\notin\Sigma^n$ for any
nontrivial permutation $\sigma$; \enu{iii} if
$(v_0,\ldots,v_n)\in\Sigma^n$ then for all $i=0,\dots,n$ a
permutation of $(v_0,\ldots,\hat v_i,\ldots,v_n)$ is
in~$\Sigma^{n-1}$.
\smallskip

Note that this definition is slightly different from that
in~\cite{Ga1}, as we prefer to fix an order on the vertices
of every simplex; in particular, our simplices are oriented.

Given a simplicial $R$-complex $\Sigma$, we may associate to it a
field of simplicial complexes $\Sigma_x$ by letting $\Sigma_x^n$ be
the fiber of $\Sigma^n\to X$ over $x$. One says that $\Sigma$ is
$n$-dimensional, contractible, and so on, if these properties hold
for $\Sigma_x$ for $\mu$-a.e. $x\in X$.

For a simplicial $R$-complex $\Sigma$ we put
$$
C_n^b(\Sigma)=\Gamma^b(\Sigma^n),\ \
C_n(\Sigma)=\Gamma(\Sigma^n),\ \
C_n^{(2)}(\Sigma)=\Gamma^{(2)}(\Sigma^n).
$$
The boundary operators $\partial_{n,x}\colon C_n(\Sigma_x)\to
C_{n-1}(\Sigma_x)$ define a $\CR$-module map $\partial_n\colon
C_n(\Sigma)\to C_{n-1}(\Sigma)$. It maps $C_n^b(\Sigma)$ into
$C_{n-1}^b(\Sigma)$.

A simplicial $R$-complex $\Sigma$ is called uniformly locally
bounded (ULB) if there is an integer $m$ such that every vertex of
$\Sigma_x$ is contained in no more than $m$ simplices for almost
every $x\in X$, and if furthermore $\Sigma^0$ has a fundamental
domain of finite measure. The first condition guarantees that the
boundary operators $\partial_{n,x}$ define a bounded $\LR$-module map
$\partial_n\colon C_n^{(2)}(\Sigma)\to C_{n-1}^{(2)}(\Sigma)$. The
second condition is equivalent to
$\dim_{\LR}C_0^{(2)}(\Sigma)<\infty$. The two conditions together
imply that $\Sigma^n$  has a fundamental domain of finite measure
for any $n$, that is, $\dim_{\LR}C_n^{(2)}(\Sigma)<\infty$.

\smallskip

For a ULB simplicial $R$-complex $\Sigma$, its $n$-th reduced
$L^2$-homology is defined by
$$
\bar H_n^{(2)}(\Sigma,R)=\ker(\partial_n\colon
C_n^{(2)}(\Sigma)\to C_{n-1}^{(2)}(\Sigma))/
\overline{\operatorname{im}(\partial_{n+1}\colon
C_{n+1}^{(2)}(\Sigma)\to C_n^{(2)}(\Sigma))},
$$
and then its $n$-th $L^2$-Betti number is defined by
$$
\beta^{(2)}_n(\Sigma,R)=\dim_{\LR}\bar H_n^{(2)}(\Sigma,R).
$$
For a general simplicial $R$-complex $\Sigma$ consider an exhaustion
$\{\Sigma_i\}^\infty_{i=1}$ of $\Sigma$ by ULB complexes, that is,
$\Sigma_i^n\subset\Sigma_{i+1}^n\subset\Sigma^n$ and
$\cup_i\Sigma^n_{i,x}=\Sigma^n_x$ for a.e. $x\in X$ and all $n\ge0$.
Then define
\[
\beta_n^{(2)}(\Sigma,R)=\lim_i\lim_j\dim_{\LR}\overline{ \image(\bar
H_n^{(2)}(\Sigma_i,R)\to \bar H_n^{(2)}(\Sigma_j,R))}.
\]
It is shown in~\cite{Ga1} that $\beta_n^{(2)}(\Sigma,R)$ does not
depend on the choice of exhaustion. This will also follow from the
proof of the next result.

\begin{proposition} \label{pLuck}
For any simplicial $R$-complex $\Sigma$ we have
$$
\beta_n^{(2)}(\Sigma,R)=\dim_{\LR}H_n(\LR\otimes_{\CR}C^b_\bullet(\Sigma))
=\dim_{\LR}H_n(\LR\otimes_{\CR}C_\bullet(\Sigma)).
$$
\end{proposition}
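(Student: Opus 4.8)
The plan is to reduce to the uniformly locally bounded (ULB) case and, for ULB complexes, to prove the three quantities coincide by transporting everything through the rank completion $c_{\LR}$. Write $N=L^\infty(X)$, and recall that the pair $N\subset\CR$ satisfies the equivalent conditions of Lemma~\ref{lcomp}. Since each $\Sigma^n$ is a discrete $R$-space, $C^b_n(\Sigma)=\Gamma^b(\Sigma^n)\cong\bigoplus_k\CR\chi_{X_k}$ is a projective $\CR$-module, so $\LR\otimes_{\CR}C^b_n(\Sigma)\cong\bigoplus_k\LR\chi_{X_k}$, while $C^{(2)}_n(\Sigma)\cong\bigoplus_k L^2(\LR)\chi_{X_k}$.

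First I would treat a ULB complex $\Sigma$ by comparing the three complexes level by level. The inclusion $C^b_\bullet(\Sigma)\hookrightarrow C_\bullet(\Sigma)$ is a $\dim_N$-isomorphism in each degree, since $\Gamma^b(\Sigma^n)$ is $N$-dense in $\Gamma(\Sigma^n)$: for $f\in\Gamma(\Sigma^n)$ one removes the set of fibers on which the support-count or the sup-norm of $f$ exceeds a large threshold, a set of arbitrarily small measure. By Lemma~\ref{liso}, applied to $N\subset\CR\subset\LR$ with $n=0$, this inclusion induces a degreewise $\dim_{\LR}$-isomorphism $\LR\otimes_{\CR}C^b_\bullet(\Sigma)\to\LR\otimes_{\CR}C_\bullet(\Sigma)$. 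In the same way, the action map $\LR\otimes_{\CR}C^b_\bullet(\Sigma)\to C^{(2)}_\bullet(\Sigma)$, $m\otimes f\mapsto mf$, is a chain map which is a degreewise $\dim_{\LR}$-isomorphism, because $\LR$ is $\LR$-dense in $L^2(\LR)$ (for $\xi\in L^2(\LR)$ and $e_t=\chi_{[0,t]}(|\xi|)$ one has $\xi e_t\in\LR$ and $[\xi-\xi e_t]_{\LR}\le\tau(\chi_{(t,\infty)}(|\xi|))\to0$). Since $c_{\LR}$ is exact, carries $\dim_{\LR}$-isomorphisms to isomorphisms (Lemma~\ref{lThom}), and commutes with homology, these degreewise isomorphisms yield
$$
\dim_{\LR}H_n(\LR\otimes_{\CR}C_\bullet(\Sigma))=\dim_{\LR}H_n(\LR\otimes_{\CR}C^b_\bullet(\Sigma))=\dim_{\LR}H_n(C^{(2)}_\bullet(\Sigma)).
$$
Finally $H_n(C^{(2)}_\bullet(\Sigma))=\ker\partial_n/\image\partial_{n+1}$ surjects onto $\bar H^{(2)}_n(\Sigma,R)=\ker\partial_n/\overline{\image\partial_{n+1}}$ with kernel $\overline{\image\partial_{n+1}}/\image\partial_{n+1}$, which has $\dim_{\LR}=0$ because for a bounded $\LR$-linear map $T$ of Hilbert modules $\image T$ is $\LR$-dense in $\overline{\image T}$ (a spectral cut-off of $TT^*$). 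Hence $\dim_{\LR}H_n(C^{(2)}_\bullet(\Sigma))=\dim_{\LR}\bar H^{(2)}_n(\Sigma,R)=\beta^{(2)}_n(\Sigma,R)$, which disposes of the ULB case.

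For a general $\Sigma$ with exhaustion $\{\Sigma_i\}$, set $\widetilde C_\bullet=\varinjlim_i C^b_\bullet(\Sigma_i)$, the bounded chains supported on some $\Sigma_i$. I claim $\widetilde C_n$ is $N$-dense in $C^b_n(\Sigma)$: an $f\in\Gamma^b(\Sigma^n)$ has fiberwise finite support, so for a.e.\ $x$ it lies in $\Sigma^n_{i(x),x}$ for some finite $i(x)$, and discarding the fibers with $i(x)>i$ (a set of measure tending to $0$) leaves a chain supported on $\Sigma^n_i$. As in the ULB step, Lemma~\ref{liso} and exactness of $c_{\LR}$ give $\dim_{\LR}H_n(\LR\otimes_{\CR}\widetilde C_\bullet)=\dim_{\LR}H_n(\LR\otimes_{\CR}C^b_\bullet(\Sigma))$. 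Now $\LR\otimes_{\CR}\widetilde C_\bullet=\varinjlim_i\LR\otimes_{\CR}C^b_\bullet(\Sigma_i)$, so its homology is $\varinjlim_i H_n(\LR\otimes_{\CR}C^b_\bullet(\Sigma_i))$, and the inductive-limit formula for $\dim_{\LR}$ from Section~1 expresses its dimension as $\lim_i\lim_j\dim_{\LR}\image(H_n(\LR\otimes_{\CR}C^b_\bullet(\Sigma_i))\to H_n(\LR\otimes_{\CR}C^b_\bullet(\Sigma_j)))$. The isomorphisms of the ULB step are natural in $i$, so after applying $c_{\LR}$ the transition maps of this system are identified with those of $\{\bar H^{(2)}_n(\Sigma_i,R)\}$; since $c_{\LR}$ preserves images and the algebraic and closed images of the Hilbert-module maps $\bar H^{(2)}_n(\Sigma_i,R)\to\bar H^{(2)}_n(\Sigma_j,R)$ have the same $\dim_{\LR}$, the corresponding image-dimensions agree. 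Taking $\lim_i\lim_j$ identifies the common value with $\beta^{(2)}_n(\Sigma,R)$, and the equality with the $C_\bullet$ version follows as before. In passing this shows $\beta^{(2)}_n(\Sigma,R)$ is independent of the exhaustion.

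The step I expect to be the main obstacle is the bookkeeping in the general case: making the comparison isomorphisms genuinely natural in the exhaustion index so that images of transition maps can be matched, and correctly interchanging the algebraic image occurring in $H_n$ of the algebraic complex with the closed image occurring in reduced $L^2$-homology. The one analytic input behind both this matching and the ULB identification is that a bounded $\LR$-module map of Hilbert modules has algebraic image $\LR$-dense in its closure; granting it, the rest is formal once one uses exactness of $c_{\LR}$ together with Lemmas~\ref{lThom}--\ref{lres}.
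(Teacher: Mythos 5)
Your overall architecture coincides with the paper's (ULB case by comparison with the $L^2$-chain complex, then exhaustion, the inductive-limit formula for $\dim_{\LR}$, $L^\infty(X)$-density of $\cup_iC^b_\bullet(\Sigma_i)$, and Lemma~\ref{liso}), but there is a genuine gap at the very first step. The identification $C^b_n(\Sigma)=\Gamma^b(\Sigma^n)\cong\bigoplus_k\CR\chi_{X_k}$ is false: $P=\bigoplus_k\CR\chi_{X_k}$ is only a proper submodule of $\Gamma^b(\Sigma^n)$, which is $L^\infty(X)$-dense in it. Already for $(U,\pi)=(R,\pi_l)$ the module $\Gamma^b(R)$ contains functions whose $\pi_l$-fibre supports are uniformly bounded but whose $\pi_r$-fibre supports are not, so they are not supported on finitely many graphs of elements of $[R]$ and hence do not lie in $\CR$; and when $U$ has infinitely many pieces $R\Delta(X_k)$, elements of $\Gamma^b(U)$ may meet infinitely many pieces while elements of $P$ meet only finitely many. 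Consequently $C^b_n(\Sigma)$ is not (visibly) projective, the computation $\LR\otimes_{\CR}C^b_n(\Sigma)\cong\bigoplus_k\LR\chi_{X_k}$ is unjustified, and with it your claim that the action map $\LR\otimes_{\CR}C^b_\bullet(\Sigma)\to C^{(2)}_\bullet(\Sigma)$ is a degreewise $\dim_{\LR}$-isomorphism. That claim is precisely Lemma~\ref{ldense}(ii) of the paper, and proving it is where the real work lies: one first proves it for the $N$-dense projective submodule $P$ (where your $\LR$-density argument applies, together with the ULB condition $\sum_k\mu(X_k)<\infty$, which is what lets one pass from the algebraic to the Hilbert direct sum), and then transfers it to $C^b_n(\Sigma)$ by applying Lemma~\ref{liso} to the $\dim_N$-isomorphism $P\hookrightarrow C^b_n(\Sigma)$ --- the same density-plus-Lemma~\ref{liso} pattern you use elsewhere, but which is missing exactly at this point.

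A second, smaller but real, problem is your ``one analytic input'': it is false in the generality you state it. For infinitely generated Hilbert modules the algebraic image of a bounded $\LR$-map need not be $\LR$-dense in its closure: take $\LR=L^\infty[0,1]$ and $T$ the diagonal operator on the Hilbert direct sum $\bigoplus_kL^2(\LR)$ whose $k$-th entry is multiplication by $2^{-k/2}k^{-1/2}$; then $\overline{\image T}$ is the whole module, but the vector $(2^{-k/2})_k$ has rank-distance $1$ from $\image T$. What is true, and is all you need, is the statement for modules of finite $\dim_{\LR}$, i.e.\ in the ULB setting. Moreover the parenthetical proof ``a spectral cut-off of $TT^*$'' is too quick: the spectral projections of $TT^*$ lie in the commutant of $\LR$, not in $\LR$, so one must still convert such cut-offs into rank estimates (via equality of the traces of the left and right supports of a vector, and normality of the finite commutant trace --- finiteness again being the ULB hypothesis). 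The paper's argument is cleaner and sidesteps all of this: $\partial_{n+1}\partial^*_{n+1}$ maps $\overline{\image\partial_{n+1}}$ injectively into $\image\partial_{n+1}$, so the two modules have equal finite $\dim_{\LR}$, and additivity then gives $\dim_{\LR}\bigl(\overline{\image\partial_{n+1}}/\image\partial_{n+1}\bigr)=0$. With these two repairs --- Lemma~\ref{ldense}(ii) in place of the false isomorphism, and the finite-dimension version of the image-density fact --- your proof becomes essentially the paper's proof.
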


This is analogous to the fact that if a discrete group $G$ acts
freely on a simplicial complex $\Sigma$ then
$\beta^{(2)}_n(\Sigma,G)
=\dim_{L(G)}H_n(L(G)\otimes_{\C[G]}C_\bullet(\Sigma))$,
see~\cite{L}, and the proof is similar, although one needs a bit
more care in dealing with different chain spaces. For the proof we
will need the following lemma.

\begin{lemma} \label{ldense}
Let $U$ be a discrete $R$-space. Then \enu{i} $\Gamma(U)$ is the
$L^\infty(X)$-completion of a projective $\CR$-module; \enu{ii} if
$\nu_U(R\backslash U)<\infty$, then the map
$\LR\otimes_{\CR}\Gamma^b(U)\to \Gamma^{(2)}(U)$, $m\otimes
\xi\mapsto m\xi$, is a $\dim_{\LR}$-isomorphism.
\end{lemma}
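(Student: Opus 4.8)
The plan is to handle the two parts separately, in both cases reducing to the decomposition $U\cong\bigsqcup_{n=1}^N R\Delta(X_n)$ recalled above. For (i) I take the algebraic direct sum $P=\bigoplus_n\CR\chi_{X_n}$. Each $\chi_{X_n}\in L^\infty(X)\subset\CR$ is an idempotent, so $\CR\chi_{X_n}$ is a direct summand of $\CR$ and hence projective, and $P$ is then projective as a direct summand of the free module $\CR^{(N)}$. Realising $P$ as the space of functions on $U$ supported on finitely many of the pieces and lying in $\CR\chi_{X_n}$ on each, one has $P\subset\Gamma^b(U)\subset\Gamma(U)$, and the goal is $\Gamma(U)=c_{L^\infty(X)}(P)$. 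Writing $[\xi]_{L^\infty(X)}=\mu(\pi(\operatorname{supp}\xi))$ for the relevant seminorm, this amounts to showing that $\Gamma(U)$ is $L^\infty(X)$-complete and that $P$ is $L^\infty(X)$-dense in it. Completeness follows by passing to a subsequence with summable consecutive distances: Borel--Cantelli then makes almost every fibre eventually constant, so the pointwise limit again has finite fibre support and lies in $\Gamma(U)$.

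For density, given $f\in\Gamma(U)$ I first discard all pieces beyond the $M$-th; since $\operatorname{supp}f$ meets only finitely many of the pieces over almost every point of $X$, this changes $f$ by a term small in $d_{L^\infty(X)}$. On each remaining piece I decompose $\operatorname{supp}f$ into countably many graphs of partial Borel isomorphisms (Lusin--Novikov), write $f$ as the resulting fibrewise-finite sum, truncate its values at a level $K$, and keep only the first $m$ graphs. The outcome is bounded and meets every row and every column in at most $m$ points, hence lies in $\CR\chi_{X_n}$ by the row/column description of $\CR$; and letting $m,K\to\infty$ it converges to $f$ in $d_{L^\infty(X)}$. This disintegration-plus-truncation approximation is the main obstacle of the whole argument; granting it, part (i) is finished.

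Part (ii) is then largely formal. Because $P\subset\Gamma^b(U)\subset\Gamma(U)$ with $P$ already $L^\infty(X)$-dense in $\Gamma(U)$, the inclusion $P\hookrightarrow\Gamma^b(U)$ induces an isomorphism on $L^\infty(X)$-completions (both equal $\Gamma(U)$), so by Lemma~\ref{lThom}(ii) it is a $\dim_{L^\infty(X)}$-isomorphism of $\CR$-modules. Applying Lemma~\ref{liso} to the triple $L^\infty(X)\subset\CR\subset\LR$ — the pair $L^\infty(X)\subset\CR$ satisfies the conditions of Lemma~\ref{lcomp}, as verified above — with $n=0$ shows that $\LR\otimes_{\CR}P\to\LR\otimes_{\CR}\Gamma^b(U)$ is a $\dim_{\LR}$-isomorphism.

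It remains to understand the map $h\colon\LR\otimes_{\CR}\Gamma^b(U)\to\Gamma^{(2)}(U)$, and I precompose it with the isomorphism just obtained. Under the decomposition the composite $\LR\otimes_{\CR}P\to\Gamma^{(2)}(U)$ is the inclusion $\bigoplus_n\LR\chi_{X_n}\hookrightarrow\bigoplus_n L^2(\LR)\chi_{X_n}$, which is injective because the trace is faithful. Its source has $\dim_{\LR}=\sum_n\mu(X_n)=\nu_U(R\backslash U)$, and so does its target by the identity $\dim_{\LR}\Gamma^{(2)}(U)=\nu_U(R\backslash U)$ recalled above; here the hypothesis $\nu_U(R\backslash U)<\infty$ is essential, for additivity of $\dim_{\LR}$ on the defining short exact sequence then forces the cokernel to be zero-dimensional, making the composite a $\dim_{\LR}$-isomorphism. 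Since $\dim_{\LR}$-isomorphisms are precisely the isomorphisms in the localized category, the two-out-of-three property yields that $h$ itself is a $\dim_{\LR}$-isomorphism.
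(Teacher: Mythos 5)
Your proof is correct and follows essentially the same route as the paper: the same decomposition $U\cong\bigsqcup_{n=1}^N R\Delta(X_n)$, the same projective module $P=\bigoplus_n\CR\chi_{X_n}$ with the same two-stage density argument (truncate to finitely many pieces, then approximate within a piece using a countable family of graphs), and the same reduction of (ii) to showing that the composite $\LR\otimes_{\CR}P\to\Gamma^{(2)}(U)$ is a $\dim_{\LR}$-isomorphism, followed by Lemma~\ref{liso} and two-out-of-three. The only cosmetic differences are that you use Lusin--Novikov partial isomorphisms and a Borel--Cantelli subsequence where the paper uses a covering of $R$ by graphs of $\phi_n\in[R]$ and an increasing sequence of projections, and that in (ii) you get zero-dimensionality of the cokernel by counting dimensions ($\dim_{\LR}=\nu_U(R\backslash U)$ on both sides, using the identity recalled before the lemma) plus additivity, where the paper instead shows directly that the algebraic direct sum $\bigoplus_n\LR\chi_{X_n}$ is $\LR$-dense in the Hilbert space direct sum.
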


\begin{proof}
We may assume that $U=\bigsqcup^N_{n=1}R\Delta(X_n)$.
Consider the projective submodule
\[
P=\bigoplus_{n=1}^N\CR\chi_{X_n}
\]
of $\Gamma^b(U)$. We claim that it is $L^\infty(X)$-dense in
$\Gamma(U)$. Indeed, let $f\in\Gamma(U)$. For $m\in\N$ consider the
set
$$
Y_m=\{x\in X\mid \operatorname{supp}
f|_{\pi^{-1}(x)}\subset\cup^m_{n=1}R\Delta(X_n)\}.
$$
Then $\{Y_m\}_m$ is an increasing sequence of Borel sets with union a
subset of $X$ of full measure. Thus $\chi_{Y_m}f\to f$ in the metric
$d_{L^\infty(X)}$ as $m\to\infty$. Furthermore, $\chi_{Y_m}f$ is
supported on $\cup^m_{n=1}R\Delta(X_n)$. Therefore we may assume
that $N$ is finite. But then it suffices to show that $\CR$ is
$L^\infty(X)$-dense in $\Gamma(R)$.

Choose a sequence of transformations $\phi_n\in[R]$ such that $R$ is
the union of the graphs of $\phi_n$. For $f\in\Gamma(R)$ and
$m\in\N$ consider the set
$$
Z_m=\{x\in X\mid |f(x,y)|\le m \hbox{ for all }y\sim x,\
\operatorname{supp}f(x,\cdot)\subset\{\phi_1(x),\dots,\phi_m(x)\}\}.
$$
Then $\chi_{Z_m}f\to f$ in the metric $d_{L^\infty(X)}$ as $m\to\infty$,
and $\chi_{Z_m}f\in\CR$. This finishes the proof of density of $P$
in $\Gamma(U)$.

To finish the proof of (i) it remains to check that $\Gamma(U)$ is
$L^\infty(X)$-complete. For this one can observe that if $Q$ is an
$M$-module for a finite von Neumann algebra $M$, then for any Cauchy
sequence in $Q$ one can choose a subsequence $\{\xi_n\}_n$ for which there is an increasing sequence of projections
$p_n\in M$ converging  strongly to the unit such that
$p_n\xi_n=p_n\xi_m$ for all $m\ge n$. But if we have a sequence of
this form in $\Gamma(U)$, it obviously converges to an element of
$\Gamma(U)$.

\smallskip

Turning to (ii), we have
$$
\Gamma^{(2)}(U)=\bigoplus_{n=1}^NL^2(\LR)\chi_{X_n}
\ \ \hbox{(Hilbert space direct sum)}.
$$
We claim that $\LR\otimes_{\CR}P\to \Gamma^{(2)}(U)$, $m\otimes
\xi\mapsto m\xi$, is a $\dim_{\LR}$-isomorphism. Indeed,
$$
\LR\otimes_{\CR}P=\bigoplus_n\LR\chi_{X_n}.
$$
Since $\LR$ is $\LR$-dense in $L^2(\LR)$, we see that
$\LR\otimes_{\CR}P$ is $\LR$-dense in the algebraic direct sum of
$L^2(\LR)\chi_{X_n}$, $n=1,\dots,N$. On the other hand, since
$\sum_n\mu(X_n)<\infty$ by assumption, the algebraic direct sum is
$\LR$-dense in the Hilbert space direct sum (because if $\xi\in
L^2(\LR)p$ for a projection $p\in \LR$ then
$[\xi]_{\LR}\le\tau(p)$). This proves our claim. Since $P$ is
$L^\infty(X)$-dense in $\Gamma^b(U)$ by~(i), by Lemma~\ref{liso} we
conclude that $\LR\otimes_{CR}\Gamma^b(U)\to \Gamma^{(2)}(U)$ is a
$\dim_{\LR}$-isomorphism.
\end{proof}

\bp[Proof of Proposition~\ref{pLuck}] We start with the first
equality. Assume that $\Sigma$ is a ULB simplicial $R$-complex. In
this case the dimension (over $\LR$) of the module
$\operatorname{im}(\partial_{n+1}\colon C_{n+1}^{(2)}(\Sigma)\to
C_n^{(2)}(\Sigma))$ coincides with the dimension of its Hilbert
space closure, since $\partial_{n+1}\partial^*_{n+1}$ maps
$\overline{\image\partial_{n+1}}$ injectively into
$\image\partial_{n+1}$. It follows that the canonical surjection
$$
H_n(C^{(2)}_\bullet(\Sigma))\to \bar
H^{(2)}_n(\Sigma,R)
$$
is a $\dim_{\LR}$-isomorphism. On the other hand, by
Lemma~\ref{ldense} we have a canonical $\dim_{\LR}$-isomorphism
$\LR\otimes_{\CR}C^b_\bullet(\Sigma)\to C^{(2)}_\bullet(\Sigma)$.
Therefore we obtain a canonical $\dim_{\LR}$-isomorphism
$$
H_n(\LR\otimes_{\CR}C^b_\bullet(\Sigma))\to \bar
H^{(2)}_n(\Sigma,R),
$$
which gives the desired result for $\Sigma$.

For a general simplicial $R$-complex $\Sigma$ consider an exhaustion
of $\Sigma$ by ULB $R$-complexes $\Sigma_i$, $i\ge1$. Then by
definition of $\beta^{(n)}_2(\Sigma,R)$, the ULB case and the fact
that the image of $\bar H^{(2)}_n(\Sigma_i,R)$ in $\bar
H^{(2)}_n(\Sigma_j,R)$ (for $j>i$) has the same dimension as its
Hilbert space closure, we can write
$$
\beta_n^{(2)}(\Sigma,R)=\lim_i\lim_j\dim_{\LR}
\image(H_n(\LR\otimes_{\CR}C^b_\bullet(\Sigma_i))\to
H_n(\LR\otimes_{\CR}C^b_\bullet(\Sigma_j))).
$$
Since the inductive limit of
$H_n(\LR\otimes_{\CR}C^b_\bullet(\Sigma_i))$ is isomorphic to
$H_n(\LR\otimes_{\CR}(\cup_iC^b_\bullet(\Sigma_i)))$, by cofinality
and additivity of the dimension function we get
$$
\beta_n^{(2)}(\Sigma,R)=\dim_{\LR}H_n(\LR
\otimes_{\CR}(\cup_iC^b_\bullet(\Sigma_i))).
$$
Next observe that $\cup_iC^b_\bullet(\Sigma_i)$ is
$L^\infty(X)$-dense in $C^b_\bullet(\Sigma)$ (we had a similar
argument in the proof of part (i) of Lemma~\ref{ldense}). By Lemma~\ref{liso} we conclude that
$$
\beta_n^{(2)}(\Sigma,R)=\dim_{\LR}H_n(\LR
\otimes_{\CR}C^b_\bullet(\Sigma)).
$$
The second equality in the statement then holds by the
$L^\infty(X)$-density of $C^b_\bullet(\Sigma)$ in
$C_\bullet(\Sigma)$, which follows from the proof of
Lemma~\ref{ldense}(i). \ep

By a result of Gaboriau~\cite{Ga1}, the numbers
$\beta^{(2)}_n(\Sigma,R)$ are the same for any contractible
simplicial $R$-complex $\Sigma$. This will also follow from the
next theorem, which is our main result.

\begin{theorem} \label{tMain}
If $\Sigma$ is a contractible simplicial $R$-complex then
$$
\beta^{(2)}_n(\Sigma,R)=\dim_{\LR}\Tor^{\CR}_n(\LR,L^\infty(X))\ \
\hbox{for all}\ \ n\ge0.
$$
\end{theorem}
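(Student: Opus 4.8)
The plan is to realise the full chain complex $C_\bullet(\Sigma)$ as a resolution of the $\CR$-module $M(X)$ and then invoke Lemma~\ref{lres}. First I would augment the complex by the map $\eps\colon C_0(\Sigma)=\Gamma(\Sigma^0)\to M(X)$, $\eps(f)(x)=\sum_{v\in\Sigma^0_x}f(v)$; the sum is finite for a.e.\ $x$ by the definition of $\Gamma(\Sigma^0)$, and a short computation using that each $(s,x)\in R$ restricts to a bijection $\Sigma^0_x\to\Sigma^0_s$ shows that $\eps$ is a morphism of $\CR$-modules. By definition, contractibility of $\Sigma$ means that the augmented fibre complex $0\leftarrow\C\leftarrow C_\bullet(\Sigma_x)$ is exact for a.e.\ $x$. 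I would upgrade this to exactness of the complex of $\CR$-modules $0\leftarrow M(X)\leftarrow C_\bullet(\Sigma)$: given a cycle $c$ with $\partial c=0$, the fibrewise solvability of $\partial b_x=c_x$ combined with a measurable selection of solutions produces a global $b$ with $\partial b=c$ (and similarly for surjectivity of $\eps$).

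Granting this, $0\leftarrow M(X)\leftarrow C_\bullet(\Sigma)$ is a resolution of $M(X)$ by $\CR$-modules. By Lemma~\ref{ldense}(i) each term $C_n(\Sigma)=\Gamma(\Sigma^n)$ is the $L^\infty(X)$-completion of a projective $\CR$-module, so it contains an $L^\infty(X)$-dense projective $\CR$-submodule. As the pair $L^\infty(X)\subset\CR$ satisfies the conditions of Lemma~\ref{lcomp}, the second part of Lemma~\ref{lres}, applied to the triple $L^\infty(X)\subset\CR\subset\LR$, gives
$$
\dim_{\LR}\Tor^{\CR}_n(\LR,M(X))=\dim_{\LR}H_n(\LR\otimes_{\CR}C_\bullet(\Sigma)).
$$
By the second (full-complex) equality in Proposition~\ref{pLuck} the right-hand side is $\beta^{(2)}_n(\Sigma,R)$.

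It remains to pass from $M(X)$ to $L^\infty(X)$. The inclusion $L^\infty(X)\hookrightarrow M(X)$ is a morphism of $\CR$-modules, and it is a $\dim_{L^\infty(X)}$-isomorphism: its kernel is zero, and for $f\in M(X)$ the projection $p=\chi_{\{|f|>m\}}\in L^\infty(X)$ has $\tau(p)=\mu(\{|f|>m\})\to0$ as $m\to\infty$, while $(1-p)f$ is bounded, so $p[f]=[f]$ in $M(X)/L^\infty(X)$ with $\tau(p)$ arbitrarily small; by the zero-dimensionality criterion this cokernel has dimension zero. Applying Lemma~\ref{liso} to the same triple $L^\infty(X)\subset\CR\subset\LR$ yields a $\dim_{\LR}$-isomorphism $\Tor^{\CR}_n(\LR,L^\infty(X))\to\Tor^{\CR}_n(\LR,M(X))$, and stringing together the three displayed identifications proves the theorem.

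The step I expect to be the main obstacle is the passage from fibrewise to global exactness in the first paragraph: the algebraic content is already packaged in Lemmas~\ref{ldense}, \ref{lres} and~\ref{liso}, so the only genuinely new work is the measurable selection that turns the a.e.-defined fibrewise contractions into an honest resolution by $\CR$-modules, together with the routine verification that the augmentation $\eps$ is $\CR$-linear.
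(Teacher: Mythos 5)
Your outline reproduces the paper's proof step for step: augment the full chain complex over $M(X)$, use Lemma~\ref{ldense}(i) together with the second part of Lemma~\ref{lres} (for the triple $L^\infty(X)\subset\CR\subset\LR$) to compute $\dim_{\LR}\Tor^{\CR}_\bullet(\LR,M(X))$ from this resolution, identify the answer with $\beta^{(2)}_n(\Sigma,R)$ via Proposition~\ref{pLuck}, and pass between $L^\infty(X)$ and $M(X)$ by Lemma~\ref{liso}; your truncation argument with $p=\chi_{\{|f|>m\}}$ is exactly the $L^\infty(X)$-density of $L^\infty(X)$ in $M(X)$ that the paper invokes. All of these steps are correct as you state them.

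The gap is the step you yourself flag: exactness of $0\leftarrow M(X)\xleftarrow{\eps}C_\bullet(\Sigma)$. This is precisely the paper's Proposition~\ref{pExact}, and it is where the real work of the theorem lies --- the paper devotes two preparatory lemmas to it. ``Fibrewise solvability combined with a measurable selection of solutions'' is not yet a proof: the solutions of $\partial_{n+1,x}b_x=c_x$ form an affine subspace of the countable-dimensional space $C_{n+1}(\Sigma_x)$ with no canonical point, so to choose one measurably in $x$ you would have to put a standard Borel structure on the fibrewise finitely supported chains, check that the constraint set is Borel with a.e.\ nonempty fibres, and invoke a uniformization theorem (Jankov--von Neumann); none of this is carried out. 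The paper sidesteps uniformization entirely by a different device: it first works with \emph{rational} coefficients, where Lemma~\ref{lproj} produces a field of projections onto $\ker\partial_{n,x}$ that is \emph{unique} subject to natural normalizations --- and uniqueness is what forces measurability --- and Lemma~\ref{lhomo} then inverts the boundary maps measurably by enumerating the countably many vectors of the rational chain spaces; the resulting fibrewise contractions are finally extended $\C$-linearly, using that a chain supported on a Borel section of $\Sigma^n\to X$ is a scalar-function multiple of a rational chain. (Note also that only exactness, not contractibility, is needed to feed Lemma~\ref{lres}, so your weaker goal would indeed suffice.) In short: your homological pipeline is the paper's, and it is sound, but the one genuinely new ingredient of the theorem --- the measurable-homotopy construction --- is missing rather than routine, and filling it requires either the descriptive-set-theoretic machinery you allude to or the rational-coefficients trick the paper uses in its place.
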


To prove the theorem we need a particular resolution of
$L^\infty(X)$. The obvious candidate is
\begin{equation} \label{ereso}
0\leftarrow L^\infty(X)\xleftarrow{\eps}C_0^b(\Sigma)
\xleftarrow{\partial_1} C_1^b(\Sigma)\xleftarrow{\partial_2}\dots,
\end{equation}
where $\eps(f)(x)=\sum_{u\in\pi^{-1}(x)}f(u)$. It is more
convenient to work with its $L^\infty(X)$-completion, the complex
$$
0\leftarrow M(X)\xleftarrow{\eps}C_0(\Sigma)\xleftarrow{\partial_1}
C_1(\Sigma)\xleftarrow{\partial_2}\dots.
$$
Recall that $M(X)$ denotes the space of measurable functions on $X$.
Fiberwise the above complex is contractible, so to check exactness
it suffices to find homotopies depending measurably on $x\in X$.
This will be done using the following two lemmas.

\begin{lemma} \label{lproj}
Let $V$ be a vector space over $\Q$ of countable dimension. Let
$x\mapsto V_x$ be a field of subspaces of $V$ such that
for all measurable mappings $s\colon X\to V$ the set $\{x\in X \mid
s(x)\in V_x\}$ is measurable. Then there is a field of projections
$x\mapsto p_x$ onto $V_x$ which is measurable in the sense that for
every measurable mapping $s\colon X\to V$ the map $x\mapsto
p_xs(x)\in V$ is measurable.
\end{lemma}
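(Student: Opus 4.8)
The plan is to produce, for each $x$, a complement $W_x$ of $V_x$ in $V$ by a greedy selection along a fixed enumeration of $V$, to set $p_x$ to be the projection onto $V_x$ along $W_x$, and then to verify measurability by hand. The whole argument rests on one preliminary observation: since $V$ has countable dimension over the countable field $\Q$, the set $V$ is itself countable. Thus a map $X\to V$ is measurable precisely when the preimage of each point is measurable, and applying the hypothesis to constant sections (and conversely decomposing an arbitrary measurable $s$ according to its countably many values) shows that the hypothesis is equivalent to the assertion that $A_v:=\{x\in X\mid v\in V_x\}$ is measurable for every $v\in V$. For the same reason it suffices to prove that, for each fixed $v\in V$, the map $x\mapsto p_xv$ is measurable.

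Next I would fix an enumeration $V=\{v_1,v_2,\dots\}$ and define the selection recursively: declare the index $i$ \emph{selected} if $v_i\notin V_x+\operatorname{span}_\Q\{v_j\mid j<i,\ j\text{ selected}\}$. Let $W_x$ be the $\Q$-span of the selected vectors. A short induction on the largest index occurring in a linear relation shows that the selected vectors are independent modulo $V_x$, so $V_x\cap W_x=0$; and since each $v_i$ lies in $V_x+W_x$ once it has been processed, $V=V_x\oplus W_x$. Hence $p_x$, the projection onto $V_x$ with kernel $W_x$, is a well-defined projection onto $V_x$, and — this is the point that keeps the construction manageable — no choice of a basis of $V_x$ itself is needed.

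The heart of the matter is measurability, and the key point is that membership in a space of the form $V_x+\operatorname{span}_\Q\{w_1,\dots,w_k\}$ with the $w_t$ fixed is a measurable condition in $x$: indeed $v\in V_x+\operatorname{span}_\Q\{w_1,\dots,w_k\}$ iff $v-\sum_t\lambda_tw_t\in V_x$ for some $(\lambda_t)\in\Q^k$, so the corresponding set is the countable union $\bigcup_{(\lambda_t)}A_{v-\sum_t\lambda_tw_t}$, which is measurable. Feeding this into the recursion and inducting on $i$, I would show that for every $m$ and every $T\subset\{1,\dots,m\}$ the set $E_T=\{x\mid\text{the selected indices}\le m\text{ are exactly }T\}$ is measurable. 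To compute $p_xv_N$ I would use that $p_xv_N=v_N-\gamma$, where $\gamma$ is the unique vector of $W_x$ with $v_N-\gamma\in V_x$; since $v_N\in V_x+W_x$ is already witnessed by the selected vectors of index $\le N$, one gets $\gamma\in\operatorname{span}_\Q\{v_j\mid j\le N\text{ selected}\}$ and in particular $p_xv_N\in\operatorname{span}_\Q\{v_1,\dots,v_N\}$.

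Finally I would partition $X$ into countably many measurable pieces on which $p_xv_N$ is constant: first by the finite set $T$ of selected indices $\le N$ (the sets $E_T$), and then, on each $E_T$, by the value of the rational coefficient vector expressing $\gamma$ in the now fixed vectors $\{v_j\mid j\in T\}$. On the piece where these coefficients equal $(\lambda_j)$ one has $p_xv_N=v_N-\sum_{j\in T}\lambda_jv_j$, a fixed element of $V$, and that piece is $E_T\cap A_{v_N-\sum_j\lambda_jv_j}$, which is measurable; uniqueness of $\gamma$, a consequence of $V_x\cap W_x=0$, makes these pieces disjoint and exhaustive. Thus $x\mapsto p_xv_N$ takes countably many values on measurable sets and is measurable, which finishes the proof. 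The main obstacle is exactly this measurability bookkeeping: both the complement $W_x$ and the coefficients expressing $\gamma$ genuinely depend on $x$, and the device that tames it is the reduction to the countably many measurable conditions $A_w$ together with the fact that $p_xv_N$ is confined to the fixed finite-dimensional space $\operatorname{span}_\Q\{v_1,\dots,v_N\}$.
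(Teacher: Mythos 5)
Your proof is correct and follows essentially the same strategy as the paper's: a greedy/inductive selection along a fixed countable enumeration (the paper decides which basis vectors $e_k$ to kill using the filtration $V_k=\operatorname{Span}_\Q\{e_1,\dots,e_k\}$, you select complement vectors from an enumeration of all of $V$), followed by measurability via a countable partition of $X$ into measurable sets on which $p_xv$ is constant. The only cosmetic difference is that the paper packages the induction as a uniqueness statement --- $p_x$ is the unique projection preserving each $V_k$ and killing $e_k$ whenever $V_k\cap V_x\subset V_{k-1}$ --- which makes it immediate that $p_xe_n$ depends only on the subspace $V_n\cap V_x$, of which there are only countably many possibilities, whereas you carry out the corresponding bookkeeping by hand with the sets $E_T$ and coefficient tuples.
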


\begin{proof}
Let $\{e_1,e_2,\ldots\}$ be a basis for $V$, and set
$V_n=\operatorname{Span}_\Q\{e_1,\ldots,e_n\}$. We claim that there
exist unique projections $p_x\colon V\to V_x$ such that
\begin{itemize}
\item[(a)] $p_xV_k\subset V_k$ for all $k\ge1$; \item[(b)]
if $V_k\cap V_x\subset V_{k-1}$ for some $k$ then $p_xe_k=0$.
\end{itemize}
To show this we shall prove by induction on $n$ that there exist
unique projections $p_x\colon V_n\to V_n\cap V_x$ satisfying
properties (a) and (b) for $k\le n$. For $n=1$ this is trivial, as
$p_xe_1=\chi_{V_x}(e_1)e_1$ is the only possible option for $p_x$.
Assume by induction that $p_x$ is defined on $V_{n-1}$. We have two
possibilities: either $V_n\cap V_x=V_{n-1}\cap V_x$ or $\dim V_n\cap
V_x=\dim (V_{n-1}\cap V_x)+1$. In the first case the condition
$p_xe_n=0$ completely determines an extension of $p_x$ to $V_n$. In
the second case there exists only one extension, since if
$v\in(V_n\cap V_x)\setminus V_{n-1}$ then $V_n=V_{n-1}\oplus\Q v$
and $V_n\cap V_x=(V_{n-1}\cap V_x) \oplus\Q v$. Thus our claim is
proved.

It remains to show that the field $x\mapsto p_x$ is measurable. For
this it suffices to check that the maps $x\mapsto p_xe_n$ are
measurable. Let $U_1,U_2,\ldots$ be an enumeration of the subspaces
of $V_n$, and let $X_m\subset X$ be the set of $x$ such that
$V_n\cap V_x=U_m$. For any $m$, the set $U_m$ is measurable by
assumption and the vector $p_xe_n$ is the same for all $x\in X_m$ by
uniqueness of $p_x$. Hence $x\mapsto p_xe_n$ is measurable.
\end{proof}

\begin{lemma} \label{lhomo}
Let $V$ be a vector space over $\Q$ of countable dimension. Let
$T_x, p_x, q_x\colon V\to V$ be measurable fields of operators, with
$p_x$ and $q_x$ idempotent. Assume $T_x$ maps $\ker q_x$ bijectively
onto $\image p_x$. Denote by $S_x$ the operator which is zero on
$\ker p_x$ and is the inverse of $T_x\colon \ker q_x\to \image p_x$
on $\image p_x$, so that
$$
T_xS_x=p_x\ \ \hbox{and}\ \ S_xT_x=1-q_x.
$$
Then the field $x\mapsto S_x$ is measurable.
\end{lemma}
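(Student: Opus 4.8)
The plan is to reduce everything to the measurability of the maps $x\mapsto S_xe_n$ for a fixed basis $\{e_1,e_2,\dots\}$ of $V$. Since $S_x$ is $\Q$-linear and every measurable $s\colon X\to V$ has measurable, pointwise finitely supported coordinate functions in this basis, measurability of each $x\mapsto S_xe_n$ yields measurability of $x\mapsto S_xs(x)$ for all measurable $s$, which is the assertion. Writing $e_n=p_xe_n+(1-p_x)e_n$ and using that $S_x$ vanishes on $\ker p_x=\image(1-p_x)$, I get $S_xe_n=S_x(p_xe_n)$; by the definition of $S_x$ this is the unique vector $w\in\ker q_x$ with $T_xw=p_xe_n$. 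So it suffices to solve this equation measurably.

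Next I would approximate $\ker q_x$ by finite-dimensional pieces. The field $x\mapsto\ker q_x$ is measurable in the sense of Lemma~\ref{lproj}, since for measurable $s$ one has $s(x)\in\ker q_x$ iff $q_xs(x)=0$ and $x\mapsto q_xs(x)$ is measurable. Applying Lemma~\ref{lproj} gives a measurable field of projections $P_x$ onto $\ker q_x$ with $P_xV_k\subset V_k$, where $V_k=\operatorname{Span}_\Q\{e_1,\dots,e_k\}$. The flag-compatibility is the crucial point: it forces $P_x|_{V_N}$ to be a projection of $V_N$ onto $\ker q_x\cap V_N$, so that $\{P_xe_1,\dots,P_xe_N\}$ is a measurable spanning family of $\ker q_x\cap V_N$. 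Pushing forward by $T_x$, the measurable vectors $g_j(x):=T_xP_xe_j$, $j=1,\dots,N$, span $W_{N,x}:=T_x(\ker q_x\cap V_N)\subset\image p_x$. Since $\bigcup_NV_N=V$ and $T_x$ maps $\ker q_x$ onto $\image p_x$, the subspaces $W_{N,x}$ increase to $\image p_x$; in particular $p_xe_n\in W_{N,x}$ for $N$ large.

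Membership $p_xe_n\in W_{N,x}=\operatorname{Span}\{g_1(x),\dots,g_N(x)\}$ is a measurable condition in $x$, since it amounts to the equality of two ranks, and ranks of families of measurable vectors are measurable (a rank is $\ge r$ exactly when some $r\times r$ minor of the coordinate matrix is nonzero). Hence I can partition $X$ into the measurable sets $X_N$ on which $N$ is the least index with $p_xe_n\in W_{N,x}$. On $X_N$ the equation $T_xw=p_xe_n$ has a unique solution $w\in\ker q_x\cap V_N$, and writing $p_xe_n=\sum_{j\le N}a_jg_j(x)$ gives $w=\sum_{j\le N}a_jP_xe_j$; by injectivity of $T_x$ on $\ker q_x$ the resulting $w$ is independent of the (possibly non-unique) choice of coefficients $a_j(x)$. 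Thus any measurable choice of a solution of this finite linear system produces the correct, measurable $x\mapsto w=S_xe_n$ on $X_N$, and patching over $N$ finishes the proof.

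The main obstacle is this last step: solving the finite systems $\sum_ja_jg_j(x)=p_xe_n$ measurably when the rank of $\{g_j(x)\}$ and the location of a nonsingular submatrix vary with $x$. This is handled exactly as the measurability argument in Lemma~\ref{lproj}: one further partitions each $X_N$ into measurable pieces according to the rank and a lexicographically chosen set of pivot rows and columns, on each of which the solution is given by Cramer's rule applied to an invertible submatrix with measurable entries, the non-pivot coordinates being set to zero.
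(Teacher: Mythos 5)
Your argument is correct, but it takes a genuinely different and much heavier route than the paper's. The paper's proof exploits the fact that $V$ is a countable \emph{set} (it has countable dimension over the countable field $\Q$): enumerate $V=\{v_1,v_2,\dots\}$ and, for each basis vector $e_j$, form the sets $X_{ij}=\{x\mid v_i\in\image(1-q_x),\ T_xv_i=p_xe_j\}$; these are measurable, and by the bijectivity hypothesis each $x$ lies in exactly one $X_{ij}$, so setting $S_xe_j=v_i$ on $X_{ij}$ exhibits the field $x\mapsto S_xe_j$ as measurable outright. In effect the paper enumerates all candidate solutions $w\in\ker q_x$ of $T_xw=p_xe_j$ and partitions $X$ according to which one works, with no need for Lemma~\ref{lproj}, rank computations, or Cramer's rule. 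You instead run a finite-dimensional approximation scheme: measurable projections $P_x$ onto $\ker q_x$, spanning families $g_j(x)=T_xP_xe_j$, membership tests via ranks and minors, and a measurable pivot-plus-Cramer selection. Every step of this is sound (countability enters through the countable families of minors, pivot patterns and coefficient tuples), so your proof is valid; what it buys in exchange for its length is robustness, since it uses only countability of the basis and index sets rather than countability of the scalar field, and so would survive in settings where the enumeration trick is unavailable.

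Two smaller points. First, the flag compatibility $P_xV_k\subset V_k$ that you call ``the crucial point'' is not part of the \emph{statement} of Lemma~\ref{lproj}; it is only established inside its proof (properties (a) and (b) there), so strictly you are invoking more than the stated lemma provides. Fortunately you do not need it: for an arbitrary measurable projection $P_x$ onto $\ker q_x$ the subspaces $P_xV_N$ still increase to $P_xV=\ker q_x$, hence $W_{N,x}:=\operatorname{Span}_{\Q}\{g_1(x),\dots,g_N(x)\}=T_xP_xV_N$ still increase to $\image p_x$, and your key step --- writing $p_xe_n=\sum_j a_jg_j(x)$ and concluding $S_xe_n=\sum_j a_jP_xe_j$ by injectivity of $T_x$ on $\ker q_x$ --- goes through verbatim. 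Second, your final Cramer's-rule machinery can be short-circuited by the same countability trick that powers the paper: the tuples $(a_1,\dots,a_N)$ range over the countable set $\Q^N$, so one may simply partition $X_N$ according to the first tuple, in a fixed enumeration of $\Q^N$, that solves $\sum_j a_jg_j(x)=p_xe_n$; pushing this one step further and enumerating candidate solutions $w\in V$ directly collapses your entire argument to the paper's two-line proof.
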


\begin{proof}
Let $\{e_1,e_2,\ldots\}$ be a basis for $V$, and enumerate $V$ as
$V=\{v_1,v_2,\ldots\}$. For $i,j\in\N$, put
\[
X_{ij}=\{x\in X : v_i\in \image(1-q_x),\ T_xv_i=p_xe_j\}.
\]
Then the $X_{ij}$ are measurable with $\bigsqcup_{i=1}^\infty
X_{ij}=X$ for all $j\in\N$, and so the field of operators given by
\[
S_xe_j = v_i \ \ \hbox{for}\ \ x\in X_{ij}
\]
is measurable. Furthermore, it clearly has the stated properties.
\end{proof}

We are now ready to prove exactness.

\begin{proposition} \label{pExact}
Let $\Sigma$ be a contractible simplicial $R$-complex. Then
$$
0\leftarrow M(X)\xleftarrow{\eps}C_0(\Sigma)\xleftarrow{\partial_1}
C_1(\Sigma)\xleftarrow{\partial_2}\dots
$$
is contractible as a complex of $L^\infty(X)$-modules.
\end{proposition}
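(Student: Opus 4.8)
The plan is to construct a contracting homotopy fiberwise and then promote it to an $L^\infty(X)$-linear one by measurability. For $\mu$-a.e.\ $x\in X$ the fiber $\Sigma_x$ is a contractible simplicial complex, so (its reduced homology vanishing) the augmented simplicial chain complex
$$
0\leftarrow\Q\xleftarrow{\ \eps_x\ }C_0(\Sigma_x)\xleftarrow{\ \partial_{1,x}\ }C_1(\Sigma_x)\xleftarrow{\ \partial_{2,x}\ }\cdots
$$
is exact. Writing $\partial_0=\eps$ and $C_{-1}=\Q$, I would first fix for each $n$ a countable-dimensional $\Q$-vector space into which all the rational chain spaces $C_n(\Sigma_x)$ embed as a measurable field of subspaces, so that the boundary maps become measurable fields of operators. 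The guiding point is that a measurable field of $\Q$-linear contracting homotopies extends $\C$-linearly (all maps produced below have rational matrices in the simplex basis) and assembles into $L^\infty(X)$-module maps $h_n\colon C_n(\Sigma)\to C_{n+1}(\Sigma)$, which then exhibit the complex as contractible over $L^\infty(X)$.

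To build the fiberwise homotopy I would use the standard splitting of an exact complex of vector spaces, made measurable by the two preceding lemmas. For each $n\ge0$, let $P_n$ be a measurable field of projections onto $\ker\partial_n$, supplied by Lemma~\ref{lproj}; its hypothesis holds because $\partial_n$ is a measurable field, so that $\{x\mid\partial_{n,x}s(x)=0\}$ is measurable for every measurable section $s$. Set $P_{-1}=1$ on $C_{-1}=\Q$. By exactness $\image\partial_n=\ker\partial_{n-1}=\image P_{n-1}$, and $\partial_n$ maps the complement $\ker P_n$ of $\ker\partial_n$ bijectively onto $\image P_{n-1}$. Lemma~\ref{lhomo}, applied with $T=\partial_n$, $q=P_n$ and $p=P_{n-1}$, therefore yields a measurable field $S_n$ with
$$
\partial_n S_n=P_{n-1}\qquad\text{and}\qquad S_n\partial_n=1-P_n.
$$

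Setting $h_n=S_{n+1}$ for $n\ge-1$ (so that $h_{-1}=S_0$ is the required splitting of the augmentation), I would then simply compute
$$
\partial_{n+1}h_n+h_{n-1}\partial_n=\partial_{n+1}S_{n+1}+S_n\partial_n=P_n+(1-P_n)=1,
$$
together with $\eps\,h_{-1}=\partial_0S_0=P_{-1}=1$ at the bottom. The two copies of $P_n$ cancel precisely because the same projection onto $\ker\partial_n=\image\partial_{n+1}$ is fed into the construction of both $S_{n+1}$ and $S_n$; this is the only bookkeeping needed to make the homotopy identity hold on the nose.

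The cancellation above is the routine algebra; the genuine work—and the reason Lemmas~\ref{lproj} and~\ref{lhomo} are phrased exactly as they are—is the measurable bookkeeping. The main obstacle I anticipate is setting up the common ambient spaces and verifying that $\ker\partial_{n,x}$ and the partial inverses $S_{n,x}$ depend measurably on $x$, i.e.\ checking the hypotheses of the two lemmas; the fiberwise bijectivity of $\partial_n|_{\ker P_n}$ onto $\image P_{n-1}$ is exactly the place where exactness of the fiber complex, hence contractibility of $\Sigma_x$, is used. Once these measurability statements are secured, extending the $\Q$-linear fiberwise homotopy to the $\C$-valued chains of $\Gamma(\Sigma^n)$ and assembling the $L^\infty(X)$-linear contraction is immediate.
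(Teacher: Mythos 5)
Your proof is correct and follows essentially the same route as the paper's: pass to rational coefficients, embed the fibers measurably into a common ambient space, use Lemma~\ref{lproj} to get measurable projections onto $\ker\partial_{n,x}$, use Lemma~\ref{lhomo} (whose hypothesis is exactly where fiberwise contractibility enters) to get the measurable partial inverses $S_n$, and assemble $h_n=S_{n+1}$ into an $L^\infty(X)$-linear contraction, extended to complex coefficients by rationality of the matrices in the simplex basis. The bookkeeping identity $\partial_{n+1}S_{n+1}+S_n\partial_n=P_n+(1-P_n)=1$ is precisely the paper's equation \eqref{ehomot}.
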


\begin{proof}
Consider first the same sequence with rational coefficients. By
choosing Borel sections of $\Sigma^n\to X$ we can embed $\Sigma^n$
into the trivial fiber space  $X\times\N$ over $X$ and then apply
Lemma~\ref{lproj} to the field of spaces
$x\mapsto\ker\partial_{n,x}\subset C_n(\Sigma_x;\Q)$. Then we get a
field of projections $p_{n,x}\colon C_n(\Sigma_x;\Q)
\to\ker\partial_{n,x}$ which is measurable in the sense that it
determines a well-defined map of $C_n(\Sigma;\Q)$ into itself. By
contractibility of $\Sigma_x$ the map $\partial_{n+1,x}$ is an
isomorphism of $\ker p_{n+1,x}$ onto $\image p_{n,x}$. By
Lemma~\ref{lhomo} we thus get measurable fields of operators
$h_{n,x}=S_x\colon C_n(\Sigma_x;\Q)\to C_{n+1}(\Sigma_x;\Q)$ such
that
\begin{equation} \label{ehomot}
\operatorname{id} = h_{n-1,x}\partial_{n,x} + \partial_{n+1,x}h_{n,x}.
\end{equation}
on $C_n(\Sigma_x;\Q)$ for all $n\ge-1$ (with
$C_{-1}(\Sigma_x;\Q)=\Q$, $\partial_{0,x}=\eps_x$ and $h_{-1,x}\colon \Q
\mapsto C_0(\Sigma_x;\Q)$).

Turning to complex coefficients, extend $h_{n,x}$ to operators
$C_n(\Sigma_x)\to C_{n+1}(\Sigma_x)$ by linearity. These operators
form a measurable field for every $n$, since if $f\in C_n(\Sigma)$
is supported on the image of a section of $\Sigma^n\to X$ then $f$
is an element of $C_n(\Sigma;\Q)$ multiplied with a function
in~$L^\infty(X)$. By \eqref{ehomot} the maps $h_{n,x}$ define the
required homotopy $h_n\colon C_n(\Sigma)\to C_{n+1}(\Sigma)$.
\end{proof}

Notice that the above proposition does not imply that complex
\eqref{ereso} is exact, only that its homology is zero-dimensional
over $L^\infty(X)$.

\bp[Proof of Theorem~\ref{tMain}] Since $L^\infty(X)$ is
$L^\infty(X)$-dense in $M(X)$, by Lemma~\ref{liso} we have
$$
\dim_{\LR}\Tor^{\CR}_n(\LR,L^\infty(X))
=\dim_{\LR}\Tor^{\CR}_n(\LR,M(X)).
$$
To compute the latter numbers, by Lemma~\ref{ldense}(i) and
Lemma~\ref{lres} we can use the resolution of $M(X)$ given by
Proposition~\ref{pExact}. The result follows then from
Proposition~\ref{pLuck}. \ep

Gaboriau~\cite{Ga1} defined the $L^2$-Betti numbers of $R$ by
letting $\beta^{(2)}_n(R)=\beta^{(2)}_n(\Sigma,R)$, where $\Sigma$
is an arbitrary contractible simplicial $R$-complex. By the above
result this definition is equivalent to that of Sauer~\cite{S},
$\beta^{(2)}_n(R)= \dim_{\LR}\Tor^{\CR}_n(\LR,L^\infty(X))$. The
proof also shows the following.

\begin{corollary}[\cite{Ga1}, Theorem~3.13]
If $\Sigma$ is an $n$-connected simplicial $R$-complex, then
\[
\beta_k^{(2)}(\Sigma,R)=\beta_k^{(2)}(R) \ \ \hbox{for}\ \ 0\le k\le
n,\ \ \hbox{and}\ \
\beta_{n+1}^{(2)}(\Sigma,R)\ge\beta_{n+1}^{(2)}(R).
\]
\end{corollary}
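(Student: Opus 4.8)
The plan is to reduce everything to the homological computation already carried out in the proof of Theorem~\ref{tMain}, rather than to the more geometric route of enlarging $\Sigma$ to a contractible complex by equivariantly attaching simplices in dimensions $\ge n+2$ (which would require a measurable cell-attaching apparatus not developed here). By Proposition~\ref{pLuck} we have $\beta_k^{(2)}(\Sigma,R)=\dim_{\LR}H_k(\LR\otimes_{\CR}C_\bullet(\Sigma))$ for every simplicial $R$-complex, while Theorem~\ref{tMain} together with Lemma~\ref{liso} (applied to the $L^\infty(X)$-dense inclusion $L^\infty(X)\subset M(X)$) gives $\beta_k^{(2)}(R)=\dim_{\LR}\Tor_k^{\CR}(\LR,L^\infty(X))=\dim_{\LR}\Tor_k^{\CR}(\LR,M(X))$. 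As in the proof of Lemma~\ref{lres} I would work with the functor $F=c_{\LR}(\LR\otimes_{\CR}\,\cdot)$; since $c_{\LR}$ is exact and dimension-preserving, $\dim_{\LR}L_kF(Q)=\dim_{\LR}\Tor_k^{\CR}(\LR,Q)$ and $\dim_{\LR}H_k(F(C_\bullet(\Sigma)))=\beta_k^{(2)}(\Sigma,R)$, and each $C_k(\Sigma)=\Gamma(\Sigma^k)$ is $F$-acyclic, i.e.\ $L_jF(C_k)=0$ for $j\ge1$, by Lemma~\ref{ldense}(i) and Lemma~\ref{liso}.

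The geometric input enters only through exactness in low degrees. Since $\Sigma$ is $n$-connected, the reduced homology $\tilde H_k(\Sigma_x)$ vanishes for $k\le n$ and for a.e.\ $x$, so the construction in the proof of Proposition~\ref{pExact} (via Lemmas~\ref{lproj} and~\ref{lhomo}) still produces measurable homotopy operators, and the identity $\mathrm{id}=h_{k-1}\partial_k+\partial_{k+1}h_k$ holds on $C_k(\Sigma)$ for each $k\le n$ (it only uses $\tilde H_{k-1}=\tilde H_k=0$). Hence $0\leftarrow M(X)\xleftarrow{\eps}C_0(\Sigma)\xleftarrow{\partial_1}\dots$ is exact at $M(X),C_0(\Sigma),\dots,C_n(\Sigma)$, and writing $K_n=\ker\partial_n=\image\partial_{n+1}$ I obtain an honest finite $F$-acyclic resolution $0\to K_n\to C_n(\Sigma)\to\dots\to C_0(\Sigma)\to M(X)\to0$.

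From here the argument is dimension shifting. Extending $K_n$ to a full $F$-acyclic resolution and splicing yields a genuine $F$-acyclic resolution of $M(X)$ agreeing with $C_\bullet(\Sigma)$ in degrees $\le n$, so $L_kF(M(X))\cong H_k(F(C_\bullet(\Sigma)))$ for $k\le n-1$ at once, and for $k=n$ as well: here I would use right exactness of $F$ (as $\LR\otimes_{\CR}\,\cdot$ is right exact and $c_{\LR}$ is exact) together with $\image\partial_{n+1}=K_n$ to see that the degree-$n$ boundary $\image F(\partial_{n+1})$ equals $\image\bigl(F(\iota)\bigr)$, $\iota\colon K_n\hookrightarrow C_n(\Sigma)$, and is therefore unchanged by the replacement of the tail. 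Passing to dimensions gives $\beta_k^{(2)}(\Sigma,R)=\beta_k^{(2)}(R)$ for $0\le k\le n$.

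For the inequality at $k=n+1$ I would compute $L_{n+1}F(M(X))$ by dimension shifting down the resolution to get $L_{n+1}F(M(X))\cong\ker\bigl(F(K_n)\xrightarrow{F(\iota)}F(C_n(\Sigma))\bigr)$ (the case $n=0$ being the base step). Factoring $\partial_{n+1}=\iota\circ\bar\partial_{n+1}$ with $\bar\partial_{n+1}\colon C_{n+1}(\Sigma)\twoheadrightarrow K_n$ surjective and $\bar\partial_{n+1}\partial_{n+2}=0$, right exactness of $F$ shows that $F(\bar\partial_{n+1})$ carries $\ker F(\partial_{n+1})$ onto $\ker F(\iota)$ and annihilates $\image F(\partial_{n+2})$, producing a surjection $H_{n+1}(F(C_\bullet(\Sigma)))\twoheadrightarrow L_{n+1}F(M(X))$; monotonicity of $\dim_{\LR}$ under surjections then yields $\beta_{n+1}^{(2)}(R)\le\beta_{n+1}^{(2)}(\Sigma,R)$. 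The main obstacle is exactly this top-degree bookkeeping: isolating where the partial resolution fails and checking that the single syzygy $K_n$ governs both the degree-$n$ equality and the degree-$(n+1)$ inequality, with right exactness of $F$ doing the crucial work in both.
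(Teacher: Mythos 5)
Your proof is correct, and it rests on the same pillars as the paper's: $n$-connectedness plus the measurable homotopies of Proposition~\ref{pExact} give exactness of $0\leftarrow M(X)\leftarrow C_0(\Sigma)\leftarrow\dots$ through degree $n$, one splices a projective resolution onto the resulting syzygy, and Lemma~\ref{ldense}(i) together with Lemmas~\ref{liso} and~\ref{lres} converts the spliced resolution into a computation of $\dim_{\LR}\Tor^{\CR}_\bullet(\LR,M(X))=\beta^{(2)}_\bullet(R)$, while Proposition~\ref{pLuck} identifies the homology of $\LR\otimes_{\CR}C_\bullet(\Sigma)$ with $\beta^{(2)}_\bullet(\Sigma,R)$. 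The difference is where you splice. The paper resolves $\ker\partial_{n+1}$ and attaches the projectives at degree $n+2$, so its resolution $P_\bullet$ agrees with $C_\bullet(\Sigma)$ in all degrees $\le n+1$; then $H_k(\LR\otimes_{\CR}P_\bullet)=H_k(\LR\otimes_{\CR}C_\bullet(\Sigma))$ for $k\le n$ is automatic (homology in degree $k$ only sees degrees $\le k+1$), and the inequality at $n+1$ comes from the one-line observation that $\image(\LR\otimes P_{n+2}\to \LR\otimes P_{n+1})$ contains $\image(\LR\otimes\partial_{n+2})$, whence a surjection $H_{n+1}(\LR\otimes_{\CR}C_\bullet(\Sigma))\to H_{n+1}(\LR\otimes_{\CR}P_\bullet)$. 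You instead resolve $K_n=\ker\partial_n$ and splice at degree $n+1$, so your complex agrees with $C_\bullet(\Sigma)$ only in degrees $\le n$, and you must pay for this with two extra (but correct) arguments the paper never needs: the right-exactness computation $\image F(Q_0\to C_n)=\image F(\iota)=\image F(\partial_{n+1})$ to recover the degree-$n$ equality, and the dimension-shifting identification $L_{n+1}F(M(X))\cong\ker\bigl(F(K_n)\to F(C_n(\Sigma))\bigr)$ together with the surjection induced by $F(\bar\partial_{n+1})$ for the degree-$(n+1)$ inequality. The "top-degree bookkeeping" you single out as the main obstacle is thus an artifact of your choice of syzygy: since the splice only requires surjectivity of $Q_0$ onto the chosen kernel (not exactness of the geometric complex at $C_{n+1}$), you could equally well have resolved $\ker\partial_{n+1}$, kept $C_{n+1}(\Sigma)$ as a term, and both of your delicate steps would have evaporated.
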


\begin{proof}
By the proof of Proposition~\ref{pExact} the sequence
\begin{equation} \label{ereso2}
0\leftarrow M(X)\xleftarrow{\eps}C_0(\Sigma)\xleftarrow{\partial_1}
C_1(\Sigma)\xleftarrow{\partial_2}\dots\xleftarrow{\partial_{n+1}}
C_{n+1}(\Sigma)
\end{equation}
is exact. Taking a projective $\CR$-resolution of
$\ker\partial_{n+1}$ we get a resolution $0\leftarrow M(X)\leftarrow
P_\bullet$ such that its initial segment coincides with
\eqref{ereso2} and $P_k$ is projective for $k\ge n+2$. Then
$$
H_k(\LR\otimes_{\CR}P_\bullet)
=H_k(\LR\otimes_{\CR}C_\bullet(\Sigma))\ \ \hbox{for}\ \ k\le n,
$$
and since $\image(P_{n+2}\to P_{n+1})$ contains the image of
$\partial_{n+2}$, there is a surjective map
$$
H_{n+1}(\LR\otimes_{\CR}C_\bullet(\Sigma))\to
H_{n+1}(\LR\otimes_{\CR}P_\bullet).
$$
This gives the result.
\end{proof}

\end{document}